\def\D{\Delta}
\newcommand{\cF}{{\mathcal F}}
\newcommand{\ds}{\displaystyle}
\newcommand{\diver}{{\rm{div}}}
\newcommand{\T}{{\mathbb{T}}}
\newtheorem{remark}{\textbf{Remark}}[section]
\newtheorem{proposition}{\textbf{Proposition}}[section]
\numberwithin{equation}{section}
\title[Newton's iterations for MFG]{Newton Methods for Mean Field Games: A Numerical Study}
\author[Elisabetta Carlini]{Elisabetta Carlini\cs}
\thanks{\cs Dipartimento di Matematica Guido Castelnuovo, Sapienza Universit\`a di Roma, 00185 Rome, Italy (carlini@mat.uniroma1.it).}
\author[Ahmad Zorkot]{Ahmad Zorkot\ha}
\thanks{\ha Faculty of Mathematics, University of Vienna, Oskar-Morgenstern-Platz 1, 1090 Vienna, Austria. (ahmad.zorkot@univie.ac.at).}
\newcommand{\cs}{$^1$} \newcommand{\ha}{$^2$}
\def\dd{{\rm d}}
\def\weight(#1,#2){c_{#1,#2}}
\def\Dt{\Delta t}
\def\B{\mathcal{B}}
\def\G{\mathcal{G}}
\def\I{\mathcal{I}}
\def\eps{\varepsilon}
\def\diag{{\mathop{\rm diag}}}
\def\1B{{\bf  1}}
\newcommand{\NN}{\mathbb{N}}
\newcommand{\RR}{\mathbb{R}}
\newcommand\be{\begin{equation}}
\newcommand\ee{\end{equation}}
\newcommand\ba{\begin{array}}
\newcommand\ea{\end{array}}
\newcommand{\bean}{\begin{eqnarray*}}
\newcommand{\eean}{\end{eqnarray*}}
\def\ds{\displaystyle}
\begin{document}

\maketitle

\begin{abstract}
  We address the numerical solution of second-order Mean Field Game problems through Newton iterations in infinite dimensions, introduced in \cite{newton}, where quadratic convergence of the method was rigorously established. Building upon this theoretical framework, we develop new numerical discretization techniques, including both a finite difference and a semi-Lagrangian scheme, that enable an effective computational implementation of the infinite-dimensional iterations.
  The proposed methods are tested on several benchmark problems, and the resulting numerical experiments demonstrate their robustness, accuracy, and efficiency. A comparative analysis between the two schemes and existing approaches from the literature is also presented, highlighting the potential of Newton-based solvers for MFG systems.
\end{abstract}

\section{Introduction}
Mean field games (MFG) offer a powerful framework for modeling the strategic behavior of a large
number of indistinguishable agents who interact through the statistical distribution of their states. Introduced independently by Lasry and Lions \cite{ll2,ll3,L-L} and by Huang, Malhamé, and Caines \cite{malhame2,malhame4}, MFG has become a central tool for the analysis of Nash equilibria in stochastic differential games with infinitely many players. MFG models have found applications in a broad range of fields, including economics, finance, traffic flow, crowd dynamics, and the social sciences. We refer to \cite{carmonafinance,ext6,mfgnash,mfgnash2,MR3195844} for general overviews of the theory and its applications.

From an analytical viewpoint, the classical formulation of MFGs introduced in \cite{ll2,ll3} consists of a coupled system of partial differential equations (PDEs). The backward component is a Hamilton--Jacobi--Bellman (HJB) equation associated with the optimal control problem of a representative agent, complemented by a terminal condition, while the forward component is a Fokker--Planck (FP) equation governing the evolution of the population density and subject to an initial condition. Comprehensive presentations of this framework can be found in the monographs~\cite{ext7,ext6,mfgnash2}, the survey~\cite{MR3195844}, and the lecture notes~\cite{MR4214773}.

In this work, we focus on the following time-dependent, second-order Mean Field Game system with local coupling, posed on the $d$-dimensional torus $\mathbb{T}^d$:
\begin{equation}
\label{MFGg}
\begin{cases}
-\partial_tu-\nu\Delta u+H(x,Du)=F(m(t,x)) & \text{in }[0,T]\times\mathbb T^d,\\
\partial_tm -\nu\Delta m-\diver(mD_pH(x,Du))=0 & \text{in }[0,T]\times\mathbb T^d,\\
m(0,x)=m_0(x), u(T,x)=G(x) & \text{in }\mathbb T^d,
\end{cases}
\end{equation}
where $T > 0$ denotes the time horizon, $\nu > 0$ is the diffusion coefficient, $H$ is a convex Hamiltonian, and $F$ is a coupling term depending locally on the density $m$. The choice of the torus $\mathbb{T}^d=\mathbb{R}^d/\mathbb{Z}^d$ allows us to avoid the treatment of boundary conditions.

Assume that the following hypotheses hold:
\begin{enumerate}
  \item[(i)] the initial density satisfies $m_0 \in L^\infty(\mathbb{T}^d)$, $m_0 \geq 0$, and
  $\int_{\mathbb{T}^d} m_0(x)\,dx = 1;  $
  \item[(ii)] the Hamiltonian $H(x,p)$ is such that $H(x,p)$ and its derivative $H_p(x,p)$ are of class $C^1$, moreover, $H(x,p)$ is convex with respect to the momentum variable $p$;
  \item[(iii)] the Hamiltonian satisfies a global Lipschitz condition in $p$, i.e. there exists a constant $\beta>0$ such that
  $$
  |H_p(x,p)| \leq \beta \qquad \forall (x,p) \in \mathbb{T}^d \times \mathbb{R}^d;
  $$
  \item[(iv)] the coupling functions $F(m)$ and $G(m)$ are continuous and bounded from below.
\end{enumerate}
Under these assumptions, there exists a classical solution $(u,m)$ to the MFG system~\eqref{MFGg} such that $(u,m)$ belongs to the parabolic Hölder space $C^{2+\alpha,\,1+\alpha/2}(Q)$ for some $\alpha \in (0,1)$. 
Furthermore, if the coupling functions $F(\cdot)$ and $G(\cdot)$ are non-decreasing with respect to the density $m$, the solution is unique; see~\cite[Theorem~1.11]{CardPorr2020}.
In addition, the density $m(t,\cdot)$ remains a probability measure on the unit torus $\mathbb{T}^d$ for all $t \in [0,T]$.

The numerical approximation of MFG systems has been the subject of intensive research in recent years. For second-order (stochastic) problems such as \eqref{MFGg}, finite difference schemes have been investigated in \cite{finitedifference1,finitediff2,MR3452251}, while semi-Lagrangian approaches have been developed in \cite{CSorder2,Calzola_Carlini_Silva_lg_second_order}. More recently, data-driven techniques, including deep learning, reinforcement learning, and tensor-based methods, have been applied to MFG models in \cite{MR4264647,MR4522347,MR4440805,carlinisaluzzi2025}. In the first-order (deterministic) setting, we refer to \cite{CSorder1,carlini2023lagrangegalerkin,saeed,gianatti2023approximation,GS,carlinicoscetti2025}. Broader surveys of numerical methods for MFGs can be found in \cite{mathieuachdou,mathieu1} and the references therein.

The main focus of the present work is the application of Newton’s method in infinite-dimensional function spaces to the continuous MFG system \eqref{MFGg}. This approach, originally proposed in \cite{newton} (see also \cite{berry-silva} for the stationary case), reformulates the coupled PDE system as a nonlinear operator equation and constructs successive approximations by linearization around the current iterate. Each Newton step therefore requires the solution of a coupled forward--backward linear parabolic system. A key theoretical result established in \cite{newton} is the quadratic local convergence of the continuous Newton iterations under suitable regularity and proximity assumptions.

For clarity of presentation, we introduce the discretization in the case of a separable Hamiltonian. Nevertheless, the proposed methodology naturally extends to non-separable Hamiltonians, and this extension is illustrated through a dedicated numerical experiment.

To implement Newton’s method for system \eqref{MFGg}, we introduce the operator
\be
\label{function-F}
\cF:(u,m)\to
\left( 
\begin{array}{l}
	-\partial_tu-\nu\Delta u+H(\cdot,Du)-F(m)\\
	\partial_tm -\nu\Delta  m-\diver(mD_pH(\cdot,Du))\\
	u(T,\cdot)-G(\cdot)\\
	m(0,\cdot)-m_0(\cdot)
\end{array}
\right),
\ee
so that system \eqref{MFGg} can be written compactly as
\[
\mathcal{F}(u,m)=0.
\]

Assuming sufficient smoothness of the data, the Newton iterates $(u^n,m^n)$ for $n\geq 1$ are defined by
\begin{equation}
\label{eq:Newton_iter}
	J\cF(u^{n-1},m^{n-1})(u^{n}-u^{n-1},m^{n}-m^{n-1})=- \cF(u^{n-1},m^{n-1}),
\end{equation}
where $J\cF$ denotes the Jacobian of $\cF$, given explicitly by
\be
\label{jacob}
J\cF(u,m)(v,\rho)=\left( 
\begin{array}{lr}
	-\partial_t v -\nu\Delta v+D_pH(\cdot, Du)D v\quad &-F'(m)\rho\\[4pt]
	-\diver (mD_{pp}H(\cdot, Du)D v)                      	&\partial_t \rho -\nu\Delta  \rho-\diver(D_pH(\cdot, Du)\rho)\\
	v(T,\cdot)&0\\
	0&\rho(0,\cdot) 
\end{array}
\right),
\ee
for H\"older continuous functions $v$ and $\rho$ defined on $[0,T]\times\mathbb{R}^d$.

Setting $q^{n}=D_pH(\cdot,Du^{n-1})$ and substituting \eqref{jacob} into \eqref{eq:Newton_iter}, we obtain the linearized system
\begin{equation}
\label{eq:Newton}
\begin{cases}
    -\partial_t u^{n} - \nu \Delta u^{n} + q^{n} Du^{n} = q^{n} Du^{n-1} - H(x,Du^{n-1})+ F(m^{n-1}) \\
    \hspace{5cm} + F'(m^{n-1})(m^{n}-m^{n-1})\quad &\text{in }[0,T]\times\mathbb T^d, \\[6pt]
    \partial_t m^{n} - \nu\Delta m^{n} - \diver (m^{n} q^{n}) = \diver (m^{n-1}D_{pp}H(x,Du^{n-1})(Du^{n}-Du^{n-1}))\quad &\text{in }[0,T]\times\mathbb T^d, \\[6pt]
    m^{n}(x,0) = m_0(x),\quad u^{n}(x,T)=G(x)\quad &\text{in }  \mathbb T^d. 
\end{cases}
\end{equation}

It was shown in \cite{newton} that, under appropriate assumptions, the sequence generated by these Newton iterations converges locally to the unique solution of \eqref{MFGg}, with a quadratic rate provided the initial guess is sufficiently close to the exact solution. Despite these strong theoretical guarantees, the numerical behavior of the infinite-dimensional Newton method has not been extensively investigated. The aim of the present work is to address this gap by designing suitable discretization schemes and assessing their performance through numerical experiments.

To discretize and solve the linearized systems \eqref{eq:Newton}, we consider two complementary numerical strategies: a semi-Lagrangian (SL) method and an implicit finite difference scheme. For both approaches, we establish well-posedness results and evaluate their efficiency and accuracy numerically. We also compare our methodology with the Newton-based finite difference scheme introduced in \cite{finitedifference1}, highlighting the distinction between the paradigms of ``iterate then discretize'', where Newton’s method is applied at the continuous level, and ``discretize then iterate'', where Newton’s method is applied directly to the discrete nonlinear system. Our experiments indicate that both strategies yield comparable accuracy and computational cost, while the former offers a simpler and more flexible discretization framework.

In contrast to approaches such as \cite{finitedifference1,CSorder2}, which discretize the fully nonlinear MFG system \eqref{MFGg} and subsequently apply Newton’s method at the discrete level, our strategy applies standard finite difference and semi-Lagrangian discretizations only to the linear systems arising at each Newton iteration. This results in a significantly simpler numerical structure. Moreover, our results show that the semi-Lagrangian scheme exhibits greater robustness in the hyperbolic regime, corresponding to small values of $\nu$. Finite-difference-based schemes, on the other hand, may experience numerical instabilities in this setting, in agreement with known limitations discussed in \cite{achdou1,achdou2}, and often require continuation strategies to gradually decrease $\nu$.

Since Newton’s method is only locally convergent, its effectiveness strongly depends on the choice of the initial guess. To enhance robustness when standard Newton iterations fail, we incorporate a globalized Newton strategy based on a line-search procedure, following \cite{global-newton}. This globalization technique, driven by a suitable merit function, proves particularly effective for the finite-difference-based Newton solver in the hyperbolic regime.

The remainder of the paper is organized as follows. Section~\ref{SL_newton} is devoted to the construction of a semi-Lagrangian discretization of the linearized system. In Section~\ref{upwind}, we introduce an implicit finite difference scheme. Finally, Section~\ref{numerics} presents the numerical experiments and a comparative analysis of the proposed methods.

\section{A semi-Lagrangian scheme}
\label{SL_newton}
In this section, we discretize the iterative system  \eqref{eq:Newton} by means of a semi-Lagrangian scheme in the two dimensional state-space and we prove the well-posedness of the discrete system.\\ 
We refer to \cite{Calzola_Carlini_Silva_lg_second_order,CSorder2} for the early work on approximating the second-order MFG systems using a semi-Lagrangian scheme, and to \cite{CC} for a semi-Lagrangian scheme applied to parabolic equations. 

\subsection{Notations and definitions}\label{sec:def}
In what follows, to simplify the discussion, we take $d = 2$ and consider the Hamiltonian
\begin{equation}
  H(x,p) = \frac{|p|^2}{2} -V(x), \qquad \text{for } (x,p) \in \mathbb{T}^2 \times \mathbb{R}^2.
\end{equation}
where $V$ is a given bounded potential.

Given two positive integers $N_t$ and $N_h$, we define the time step $\Delta t = \frac{T}{N_t}$ and the space step $h = \frac{1}{N_h}$. 
We also introduce the index sets 
$$
\I_{\Delta t} := \{0, \dots, N_t\}, \qquad 
\I_{\Delta t}^* := \I_{\Delta t} \setminus \{N_t\}, \qquad 
\I_h := \{0, \dots, N_h-1\}.
$$
We define the discrete time grid $\mathcal G_{\Delta t} := \{t_k = k\Delta t \mid k \in\I_{\D t}\}$ and discrete space grid $\mathcal G_{h} := \{x_{i,j} = (ih, jh) \mid i,j \in \I_{h} \}$. 
We denote by $B(\G_h)$ and $B(\G_{\D t}\times\G_h)$ the two sets of functions defined in $\G_h$ and $\G_{\D t}\times\G_h$ respectively.

The objective is to construct two discrete functions 
\(u^{n,k}_{[i,j]}\) and \(m^{n,k}_{[i,j]}\) approximating the solution \((u^n,m^n)\) of \eqref{eq:Newton} at the grid points \((t_k,x_{i,j})\) for all \(k\in\I_{\Delta t}\) and \(i,j\in\I_h\). 
The index operator \([\cdot,\cdot]\), defined by \([i,j]=\big((i+N_h)\bmod N_h,\,(j+N_h)\bmod N_h\big)\), accounts for periodic boundary conditions. 
For notational simplicity we will also write \((u^{n,k}_{i,j},m^{n,k}_{i,j})=(u^{n,k}_{[i,j]},m^{n,k}_{[i,j]})\).
Given a grid function $v\in\B(\G_h)$, we introduce the first order central differences operators
\be\label{eq:Dx-Dy}
\ba{rcl}
\ds
 (D_1 v)_{i,j}&=\ds\frac{v_{i+1,j}-v_{i-1,j}}{2h}\quad \text{for }i,j\in\I_{h},\\[10pt]
\ds (D_ 2v)_{i,j}&=\ds\frac{v_{i,j+1}-v_{i,j-1}}{2h}\quad \text{for }i,j\in\I_{h},
\ea
\ee
and define the discrete gradient operator $D_h$ as 
\be
\label{gradient_fd}
(D_h v)_{i,j}=((D_1 v)_{i,j},(D_2  v)_{i,j})^{\top}\quad \text{for }i,j\in\I_{h}.
\ee
Then, for a discrete vector field $p = (p_1, p_2) \in B(\G_h)^2$, 
we define the discrete divergence operator $\diver_h$ as
\begin{equation}
\label{div_fd}
(\diver_h(vp))_{i,j}
= \frac{1}{2h} \Big(
v_{i+1,j}(p_1)_{i+1,j} - v_{i-1,j}(p_1)_{i-1,j}
+ v_{i,j+1}(p_2)_{i,j+1} - v_{i,j-1}(p_2)_{i,j-1}
\Big),
\quad i,j \in \I_h.
\end{equation}

Given $\phi\in \B(\G_{h})$, we define its piecewise linear interpolant  as
$$
I[\phi](x)=\sum_{i,j\in\I_{h}}\beta_{i,j}(x)\phi_{i,j}\quad\text{for all } x\in\mathbb T^2,
$$
where $\phi_{i,j}=\phi(x_{i,j})$, and $\beta_{i,j}$ denote the piecewise linear finite element basis associated to the grid $\G_h$. 
For any function $\varphi:\mathbb T^{2}\to\mathbb R$, let $\varphi|_{\mathcal G_h}$ denote its restriction to the grid $\mathcal G_h$. If $\varphi\in C^{2}(\mathbb T^{2})$ and its second-order derivatives are bounded, then, by \cite[Remark~3.4.2]{quarteronivalli94}, the interpolation error satisfies
\begin{equation}
\label{interpolation_phi}
\|\varphi - I[\varphi|_{\mathcal G_h}]\|_{\infty}\le Ch^{2},
\end{equation}
where $C>0$ depends only on $\varphi$.

\subsection{Semi-Lagrangian scheme for the backward equation}
Building on the notations and framework introduced in the previous section, we now apply the semi--Lagrangian discretization to approximate the iterative system~\eqref{eq:Newton}.
We start with the backward equation, which can be written as follows:
\begin{equation}
\label{first_eq}
\begin{cases}
    -\partial_t u^{n} - \frac{\sigma^2}{2} \Delta u^{n} + q^{n} Du^{n} - G^n(t,x)
    =0\quad&\text{in } [0,T]\times\mathbb T^2,\\
   u^{n}(x,T)=G(x) \quad&\text{in }\mathbb T^2, 
\end{cases}
\end{equation}
where $$\sigma=\sqrt{2\nu}, \quad q^n(t,x)=Du^{n-1}(t,x)$$ and
\begin{equation*}
\ba{rcl}
G^n(t,x)&=&\ds\frac{|q^n(t,x)|^2}{2}+V(x)\\[6pt]
&\hspace{0.2cm}&+\ds F(m^{n-1}(t,x)) 
+F'(m^{n-1}(t,x))(m^{n}(t,x)-m^{n-1}(t,x)),\quad \text{for }(t,x)\in[0,T]\times\mathbb T^2.
\ea
\end{equation*}
By the Feynman-Kac formula (see e.g \cite{yong}), the solution $u^n$ to \eqref{first_eq}, admits the following representation
\be
\label{exp}
u^n(t,x)=\mathbb E\Big[\int_t^T G^n(s,X^{t,x}(s))\dd s+G(X^{t,x}(T))\Big],\quad \text{for }(t,x)\in[0,T]\times \mathbb T^2,
\ee
where $X^{t,x}$ denotes characteristics solving
\begin{equation}
\label{f_eq}
\left\{
\begin{aligned}
   dX(s)&=q^n(s,X(s))+\sigma dW(s)\quad \text{for } s\in(t,T),\\
   X(t)&=x .
\end{aligned}
\right.
\end{equation}
We explain now how to construct a SL approximation using the technique shown in \cite{falcone}. 
First, notice that \eqref{exp} imply that for every $k\in\I^*_{\D t}$, we have
\be
\label{exp2}
u^n(t_k,x)=\mathbb E\Big[\int_{t_k}^{t_{k+1}}G^n(s,X^{t_k,x}(s) )\dd s+u^{n}(t_{k+1}, X^{t_k,x}(\Delta t) )\Big].
\ee
We denote by
$q^{n,k}_{i,j}\in\RR^{2}$ an appropriate approximation of the drift term $q^n(t_k,x_{i,j})$ for each $k\in\I_{\Dt}$ and $i,j\in\I_{h}$. 
Then, we approximate the expectation in \eqref{exp2} (see e.g \cite{Kloeden}) as
\be
\label{exp3}
\mathbb E\left[u^n(t_k,  X^{t_k,x_{i,j}}(\Delta t))\right] = \frac{1}{4}\sum_{\ell=1}^4 u^n(t_k, y^{\ell}(x_{i,j},q^{n,k}_{i,j})) + O((\Delta t)^2),
\ee
where
\be
\label{y_k}
y^{\ell}(x_{i,j},q^{n,k}_{i,j})=(x_{i,j}+\Dt q^{n,k}_{i,j}+\sqrt{2\D t}\sigma e^\ell)_p\quad \text{for }\ell=1,\hdots,4,
\ee
with $e^\ell$ representing four vectors of $\RR^2$ with one component equal to $\pm 1$ and the other null, and
$$
(z)_p =(z_1 - \lfloor z_1 \rfloor, z_2 - \lfloor z_2 \rfloor)\quad \text{for all }
z=(z_1,z_2)\in\RR^2,$$
denotes the periodic projection on $\mathbb T^2$ of $z\in\RR^2$.

Finally, by combining \eqref{exp}, \eqref{exp2}, and \eqref{exp3}, and using the rectangular rule to approximate the integral term in \eqref{exp2},
we define the semi--Lagrangian scheme for equation~\eqref{first_eq} as follows.
Given $q^n\in B(\G_{\D t}\times\G_h)^2$ and $m^{n},m^{n-1}\in B(\G_{\D t}\times\G_h)$,
find $u^{n}\in B(\G_{\D t}\times\G_h)$  such that
\begin{equation}
\label{scheme-B}
\begin{cases}
\begin{split}
  u^{n,k}_{i,j}&=S^n_{k,(i,j)}(u^{n,k+1})\quad &\text{for all } k\in \mathcal I_{\Delta t}^*, \, i,j\in\I_{h}, \\[4pt]
   u^{n,N_{t}}&= G(x_{i,j}), \\[4pt]
\end{split}
\end{cases}
\end{equation}
where, for every $f\in B(\G_h)$, $k\in \mathcal I_{\Delta t}^*$, and $i,j\in\I_h$,
\begin{equation*}
\label{SLscheme}
\begin{split}
S^n_{k,(i,j)}(f):=\frac{1}{4}  \sum_{\ell=1}^4 I[f](y^{\ell}(x_{i,j},q^{n,k}_{i,j}))
+\D t G^n( t_k,x_{i,j}).
\end{split}
\end{equation*}
Let $\phi \in C^2_0(\mathbb{T}^2)$, and let us denote by $C$ a positive real number which can depend only on $\phi$. From \eqref{y_k} and Taylor expansion (see e.g \cite{CC}) we have
\be
\label{const}
\Big|\frac{1}{4}\sum_{\ell=1}^4
 \phi(y^\ell (x_{i,j},q^{n,k}_{i,j})) - \left( \phi(x_{i,j}) + \Delta t \frac{\sigma^2}{2} \Delta \phi(x_{i,j}) + \D t q^{n,k}_{i,j}D\phi(x_{i,j}) \right)\Big| 
 \leq C(\Delta t)^2 \quad\text{for all } i,j\in\I_h.
\ee
Let $u^n$ be a smooth solution to \eqref{first_eq}, with bounded derivatives. 
Then, using \eqref{interpolation_phi} and \eqref{const}, for every $k \in \I_{\Delta t}^*$ and $i,j \in \I_h$, 
the consistency error of scheme~\eqref{scheme-B} satisfies
$$
T_{\Delta t, h}(t_k, x_{i,j}) := \frac{1}{\Delta t} \left(u^n(t_{k+1}, x_{i,j}) - S^n_{k,(i,j)}(u^n(t_{k+1})\right)=O\Big({\D t}+\frac{h^2}{\D t}\Big)\quad (t,x)\in(0,T)\times\mathbb T^2.
$$

\subsection{A semi-Lagrangian scheme for the forward equation}
Let us now consider the second equation in system \eqref{eq:Newton}
\begin{equation}
\label{second_eq}
\begin{cases}
\partial_tm^n-\frac{\sigma^2}{2}\Delta m^n-\diver(m^nq^n)=\diver(m^{n-1}(t,x)(Du^{n}(t,x)-Du^{n-1}(t,x)))
&\text{in }[0,T]\times\mathbb T^2,\\
m^n(0,x)=m_0(x) &\text{in }\mathbb T^2.
\end{cases}
\end{equation}
Following the same analogue in \cite{CSorder2},
we propose the following scheme to approximate \eqref{second_eq}.
Given $q^n,D_hu^{n-1},D_hu^n\in B(\G_{\D t}\times\G_h)^2$, and $m^{n-1}\in B(\G_{\D t}\times\G_h)$,
find $m^{n}\in B(\G_{\D t}\times\G_h)$  such that
\begin{equation}
\label{scheme-F}
\begin{cases}
\begin{split}
  m^{n,k+1}_{i,j}&=(S^n_{k,(i,j)})^*(m^{n,k})\quad \text{for all } k\in \I^*_{\Delta t},\, i,j\in\I_{h}, \\[4pt]
   m^{n,0}_{i,j}&= m_0(x_{i,j}),
\end{split}
\end{cases}
\end{equation}
 where, for a given function $f\in B(\G_h)$, $k\in\I_{\D t}^*$ and $i,j\in\I_h$
\begin{equation*}
\label{Lscheme}
\begin{split}
(S^n_{k,(i,j)})^*(f)=\frac{1}{4}\sum_{\ell=1}^4 I^*[f](y^{\ell}(x_{i,j},q^{n,k}_{i,j}))+\Delta t {(\diver_h(m^{n-1,k+1}({D}_hu^{n,k+1}-D_hu^{n-1,k+1})))_{i,j}},
\end{split}
 \end{equation*}
 where, for every $i,j\in\I_h$, $I^*[f](y^{\ell}(x_{i,j},q^{n,k}_{i,j}))$ is the adjoint operator of $f\to I[f](y^{\ell}(x_{i,j},q^{n,k}_{i,j}))$.
 
 Let $\phi, \psi \in C^4(\mathbb{T}^2),\; \varphi \in C^3(\mathbb{T}^2)$,
and set $p = D\phi - D\psi$. Using the definitions of the discrete gradient and divergence \eqref{gradient_fd}--\eqref{div_fd}, we have
\[
\mathrm{div}_h\big(\varphi (D_h \phi - D_h \psi)\big)_{i,j} 
= \frac{1}{2h} \Big( \varphi_{i+1,j} p_{1,i+1,j} - \varphi_{i-1,j} p_{1,i-1,j} + \varphi_{i,j+1} p_{2,i,j+1} - \varphi_{i,j-1} p_{2,i,j-1} \Big).
\]

Defining the smooth function $w = \varphi p$, a Taylor expansion at $x_{i,j}$ yields
\[
\frac{w_1(x_{i+1,j})-w_1(x_{i-1,j})}{2h} = \partial_x w_1(x_{i,j}) + O(h^2), 
\quad
\frac{w_2(x_{i,j+1})-w_2(x_{i,j-1})}{2h} = \partial_y w_2(x_{i,j}) + O(h^2),
\]
and hence
\[
\mathrm{div}_h(\varphi p)_{i,j} = \mathrm{div}(\varphi p)(x_{i,j}) + O(h^2).
\]

Therefore, under these regularity assumptions, the discrete term 
\[
\mathrm{div}_h\big(\varphi (D_h \phi - D_h \psi)\big)
\]
is a second-order consistent approximation 
\[
\mathrm{div}\big(\varphi (D \phi - D \psi)\big),
\]
showing that the source term \eqref{Lscheme} is $O(h^2)$ consistent in space.

\subsection{The discrete Newton system }\label{sec:Newton}
Given $v \in \B(\G_h)$, we denote by $V\in\RR^{{N_h^2}}$ the vectors such that
\begin{equation}
V_{i + j N_h} = v_{i,j}\quad \forall \quad i,j\in \I_h.
\end{equation}
Then the discrete gradient operator $D_h : \mathbb{R}^{N_h} \to \mathbb{R}^{2N_h}$, defined in \eqref{gradient_fd}, is such that \begin{equation}
D_h V = 
\begin{pmatrix} D_1 \\ D_2 \end{pmatrix} V =
\begin{pmatrix} D_1 V \\ D_2 V \end{pmatrix},
\end{equation}
where the matrices $D_1, D_2 \in \mathbb{R}^{N_h\times N_h}$ corresponds  to the operator defined in \eqref{eq:Dx-Dy}.

For a discrete vector field $p = (p_1,p_2) \in \mathbb{R}^{2N_h}$, the discrete divergence operator $\diver_h : \mathbb{R}^{2N_h} \to \mathbb{R}^{N_h}$ corresponding to \eqref{div_fd} is such that
\begin{equation}
\diver_h(p) = D_1 p_1 + D_2 p_2
= \begin{bmatrix} D_1 & D_2 \end{bmatrix} 
\begin{pmatrix} p_1 \\ p_2 \end{pmatrix}.
\end{equation}
We also denote by $\diag(V) \in \mathbb{R}^{N_h \times N_n}$ 
the diagonal matrix whose diagonal entries correspond to the components of $V$.

For $k \in \I^*_{\Delta t}$, we define the vectors 
$U^{n,k}, U^{N_t} \in \mathbb{R}^{{N_h^2}}$ by
\begin{equation}\label{eq:defU}
U^{n,k}_{i + j N_h} = u^{n,k}_{i,j}, \qquad 
U^{N_t}_{i + j N_h} = G(x_{i,j}) \quad \forall\, i,j \in \I_h.
\end{equation}
Similarly, for $k \in \I_{\Delta t} \setminus \{0\}$, we set 
$M^{n,k}, M^0 \in \mathbb{R}^{{N_h^2}}$ as
\begin{equation}\label{eq:defM}
M^{n,k}_{i + j N_h} = m^{n,k}_{i,j}, \qquad 
M^0_{i + j N_h} = m_0(x_{i,j}) \quad \forall\, i,j \in \I_h.
\end{equation}
Finally, for $k \in \I^*_{\Delta t}$, 
we define $Q^{n,k}=(Q^{1,n,k},Q^{2,n,k}) \in \RR^{2{N_h^2}}$ as
\begin{equation}\label{eq:defQ}
Q^{1,n,k}_{i + j N_h} = q^{1,n,k}_{i,j},\quad Q^{2,n,k}_{i + j N_h} = q^{2,n,k}_{i,j} \quad \text{for all } i,j \in \I_h.
\end{equation}

With this notation, scheme~\eqref{scheme-B} can be written in matrix form as
\begin{equation}
\label{SLmatrix_f}
\begin{cases}
U^{n,k} = \mathcal{A}(Q^{n,k}) \, U^{n,k+1} + \Delta t \, \mathcal{W}^k M^{n,k} + \Delta t \, \mathcal{B}^k, 
& \text{for all } k \in \I^*_{\Delta t}, \\[2mm]
U^{n,N_t} = U^{N_t},
\end{cases}
\end{equation}
where
\begin{align}
&\label{A(Q)}(\mathcal A(Q^{n,k}))_{iN_h+j,p+N_hq}=\frac{1}{2d}  \sum_{\ell=1}^4\beta_{p,q}(y^{\ell}(x_{i,j},q^{n,k}_{i,j})), \quad\text{for all }i,j,p,q\in\I_{h}\\[6pt]
&\label{Wk}
(\mathcal{W}^k)_{iN_h+j,p+N_hq} = (\operatorname{diag}\big(F'(M^{n-1,k})\big))_{iN_h+j,p+N_hq},
\\[6pt]
		&\label{Bk}(\mathcal B^k)_{i+N_hj}=\frac{|q^{n,k}_{i,j}|^2}{2}-V(x_{i,j})\nonumber\\
		&\hspace{2cm}+F((M^{n-1,k})_{i+N_hj})-F'((M^{n-1,k})_{i+N_hj})(M^{n-1,k})_{i+N_hj}\quad
	\text{for all }i,j\in\I_{h}.
\end{align}
Scheme \eqref{scheme-F} can be written in matrix form as 
 \begin{equation}
 \label{SLmatrix_b}
\left\{
\begin{array}{ll}
M^{n,k+1}=\mathcal A^*(Q^{n,k})M^{n,k}+\Delta t\mathcal Z^{k+1}U^{n,k+1}+\D t\mathcal C^{k+1}\quad
&k\in\I^*_{\D t},\\[6pt]
M^{n,0}=M^0,
\end{array}
\right.
\end{equation}
where $\mathcal A(Q)^*$ denotes the transpose of $\mathcal A(Q)$ given by \eqref{A(Q)}, for every $k\in\I^*_{\D t}$, $\mathcal Z^{k+1}$ is ${N_h^2}\times{N_h^2}$ matrix such that, given $V \in \RR^{{N_h^2}}$ and let $\mathscr{M}=M^{n-1,k+1}$
\begin{equation}
\label{Z_matrix}
\mathcal{Z}^{k+1} V = \diver_h \big( \diag(\mathscr{M}) \, D_h U \big) 
=\diag(\mathscr{M})\diver_h (D_hV)+ \diag(D_1 \mathscr{M}) D_1 V + \diag(D_2 \mathscr{M}) D_2 V.
\end{equation}
This  shows that $\mathcal{Z}^{k+1}$
can be written in  matrix form as
\begin{equation}
\label{Zk}
\mathcal{Z}^{k+1}= \diag(\mathscr{M}) \, L_h + \diag(D_1 \mathscr{M}) D_1 + \diag(D_2 \mathscr{M}) D_2.
\end{equation}
where
$L_h := D_1D_1 + D_2 D_2$ is the discrete 2D Laplacian operator.
Moreover $\mathcal C^{k+1}$ is the vector in $\RR^{{N_h^2}}$  
\begin{align}
\label{Ck}
(\mathcal C^{k+1})_{i+N_hj}=-(\mathcal{Z}^{k+1} U^{n-1,k+1})_{i+N_hj}.
\end{align}

\begin{remark}[Mass preservation]
The semi-Lagrangian adjoint operator $\mathcal A^*(Q^{n,k})$ is the transpose of a row-stochastic matrix $\mathcal A(Q^{n,k})$ representing linear interpolation along characteristics. Let $(i,j),(p,q)\in\mathcal I_h$ and define the linear indices
$
\zeta = i + N_h j,\,l = p + N_h q,
$
then let \(A\) be the  matrix with entries $  A_{\zeta,l} := \big(\mathcal A(Q^{n,k})\big)_{\zeta,l}.$ By definition,
\[
A_{\zeta,l}\ge 0, \qquad \sum_{l} A_{\zeta,l}= 1 \quad \forall \,\zeta,
\]
so $A$ is row-stochastic. Its adjoint satisfies $(A^*)_{l,\zeta} = A_{\zeta,l}$, and for any discrete density $M$ we have
\[
\sum_{l} (A^* M)_{l} = \sum_{l} \sum_{\zeta} A_{\zeta,l} M_{\zeta} 
= \sum_{\zeta} M_{\zeta} \sum_{l} A_{\zeta,l} 
= \sum_{\zeta} M_{\zeta}.
\]
Furthermore, the linearization term $\mathcal{Z}^{k+1}(U^{n,k+1}-U^{n-1,k+1})$, defined via the discrete divergence $\text{div}_h$ \eqref{div_fd}, sums to zero under periodic boundary conditions. 
Hence, if $\sum_{\zeta} M^{n,0}_{\zeta}=1$, then for all $k$,
\[
\sum_{\zeta} M^{n,k}_{\zeta} = 1,
\]
so the scheme is mass-preserving.
\end{remark}
\begin{remark}[Positivity]
The linearization term $\mathrm{div}\!\big(m^{n-1}(Du^n - Du^{n-1})\big)$ acts as a source and is generally not sign-definite. 
Although the transport part preserves positivity,
\[
M^{n,k}_l \ge 0,\;\forall\,l\Longrightarrow\; (\mathcal{A}(Q^{n,k})^* M^{n,k})_l \ge 0,\forall\,l
\]
the full scheme may produce small negative values of order $O(\Delta t)$ due to the source term, so global non-negativity of the density cannot be guaranteed.
\label{posi}
\end{remark}
\begin{remark}[Stability of the scheme \eqref{SLmatrix_b}]
If the source term in \eqref{SLmatrix_b}
satisfies 
\[
\big\| \mathcal{Z}^{k+1} (U^{n,k+1}-U^{n-1,k+1}) \big\|_{\ell^1} \le C,
\] 
for a positive $C>0$, we can show that the scheme \eqref{SLmatrix_b} is stable. Indeed, since adjoint operator $\mathcal A^*(Q^{n,k})$ preserves mass and has positive entries, we obtain
\[
\| M^{n,k+1} \|_{\ell^1} \le \| M^{n,k} \|_{\ell^1} + C \, \Delta t,
\] 
and by iterating backward in time, we get
\[
\| M^{n,k+1} \|_{\ell^1} \le \| M^{n,0} \|_{\ell^1} + C \, T.
\]
The uniform bound of the source term  is a crucial step, as it would allow one to prove convergence of the full discrete scheme. We leave this analysis for future work.\end{remark}
The final discrete Newton iteration system reads as follows. Given 
$(U^{n-1,k}, M^{n-1,k}, Q^{n,k})_{k \in \I_{\Delta t}}$, find 
$(U^{n,k}, M^{n,k})_{k \in \I_{\Delta t}}$ such that
\begin{equation}
\label{fully_newton}
    \begin{cases}
    U^{n,k}=\mathcal A(Q^{n,k})U^{n,k+1}+\Delta t\mathcal W^{k}M^{n,k}+\D t\mathcal B^{k}\quad
&k\in\I^*_{\D t},\\[4pt]
M^{n,k+1}=\mathcal A^*(Q^{n,k})M^{n,k}+\Delta t\mathcal Z^{k+1}U^{n,k+1}+\D t\mathcal C^{k+1}\quad
&k\in\I^*_{\D t},\\[4pt]
U^{n,N_t}=U^{N_t},\\[4pt]
M^{n,0}=M^0,
    \end{cases}
\end{equation}
where $\mathcal{A}(Q^{n,k}),\;\mathcal W^{k},\;\mathcal B^{k},\;\mathcal Z^{k+1}$, and $\mathcal C^{k+1}$ are given by \eqref{A(Q)}, \eqref{Wk}, \eqref{Bk}, \eqref{Zk}, and \eqref{Ck} respectively.

\subsection{Well-posedness}
To establish the well-posedness of the system \eqref{fully_newton}, we represent it in matrix form. 
For this purpose, let $\bar{U}$ and $\bar{M}$ be the vectors in $\RR^{(N_t+1)N_h^2}$ defined by
\begin{equation}
\label{notation}
(\bar U^n)_{k N_h^2 + i N_h + j} = (U^{n,k})_{i+N_h j},\quad
(\bar M^n)_{k N_h^2 + i N_h + j} = (M^{n,k})_{i+N_h j},\quad
k\in\I_{\Delta t}, \; i,j\in\I_h.
\end{equation}

Next, define the block matrices $\mathbf{A}$ and $\mathbf{W}$ as

\begin{equation*}
\mathbf{A} = 
\begin{bmatrix}
\mathcal{I} & -\mathcal{A}^0      & \mathcal{O}      & \cdots           & \mathcal{O} \\[4pt]
\mathcal{O} & \mathcal{I}         & -\mathcal{A}^1   & \ddots           & \vdots      \\[4pt]
\vdots      & \ddots              & \ddots           & \ddots           & \vdots      \\[4pt]
\vdots      & \ddots              & \ddots           & \mathcal{I}      & -\mathcal{A}^{N_t-1} \\[4pt]
\mathcal{O} & \cdots              & \cdots           & \mathcal{O}      & \mathcal{I}
\end{bmatrix},\qquad
\mathbf{W} = \Delta t\,
\begin{bmatrix}
\mathcal{W}^0 & \mathcal{O}        & \cdots & \cdots          & \mathcal{O} \\[4pt]
\mathcal{O}    & \mathcal{W}^1     & \ddots &                  & \vdots      \\[4pt]
\vdots         & \ddots             & \ddots & \ddots           & \vdots      \\[4pt]
\vdots         &                    & \ddots & \mathcal{W}^{N_t-1} & \mathcal{O} \\[4pt]
\mathcal{O}    & \cdots             & \cdots & \mathcal{O}     & \mathcal{O}
\end{bmatrix}.
\end{equation*}

Here, $\mathcal{I}$ and $\mathcal{O}$ denote the $N_h^2 \times N_h^2$ identity and zero matrices, respectively. 
If $F'>0$ and $m>0$, then the first $N_t$ diagonal blocks of $\mathbf{W}$ are positive, 
provided that $M^{n-1}$ is positive.
We also define the block diagonal matrix $\mathbf{Z}$ as
\begin{equation*}
\mathbf{Z} = \Delta t \, 
\begin{bmatrix}
\mathcal{O} & \cdots& \cdots & \cdots & \mathcal{O} \\[4pt]
\mathcal{O} & \mathcal{Z}^1 & \mathcal{O} & \cdots & \vdots\\[4pt]
\vdots & \ddots& \ddots & \ddots & \vdots \\[4pt]
\vdots & \ddots & \ddots & \ddots & \mathcal{O} \\[4pt]
\mathcal{O} & \cdots& \cdots & \mathcal{O} & \mathcal{Z}^{N_t}
\end{bmatrix},
\end{equation*}
where, for every $k \in \I^*_{\Delta t}$, $\mathcal{Z}^k$ are defined in \eqref{Zk}.

Finally, let
$$
\mathbf{B} = \Delta t \, \big[\mathcal{B}^0, \dots, \mathcal{B}^{N_t-1}, \frac{1}{\Delta t} U^{N_t}\big]^\top, 
\quad
\mathbf{C} = \Delta t \, \big[\frac{1}{\Delta t} M^0, \mathcal{C}^0, \dots, \mathcal{C}^{N_t-1}\big]^\top,
$$
with $\mathcal{B}^k$ and $\mathcal{C}^k$ given in \eqref{Bk} and \eqref{Ck}, respectively.

Then, the discrete Newton system \eqref{fully_newton} can be written as the following system:

\begin{equation}
\label{matrix-form}
\begin{bmatrix}
\mathbf{A} & -\mathbf{W} \\[2mm]
-\mathbf{Z} & \mathbf{A}^{\top}
\end{bmatrix}
\begin{bmatrix}
\bar{U}^n \\[1mm]
\bar{M}^n
\end{bmatrix}
=
\begin{bmatrix}
\mathbf{B} \\[1mm]
\mathbf{C}
\end{bmatrix}.
\end{equation}

\begin{proposition}
\label{propcons}
Assume that $F'>0$ and $\bar{M}^{n-1}>0$ for any $n\in\NN$. Then, there exists a unique solution $(\bar{U}^n,\bar{M}^n)$ to the system \eqref{matrix-form}.
\end{proposition}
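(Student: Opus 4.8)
The plan is to prove that the coefficient matrix of the square system \eqref{matrix-form} is nonsingular; equivalently, that the homogeneous problem forces $(\bar U^n,\bar M^n)=0$. Writing the two homogeneous block rows as $\mathbf A\bar U=\mathbf W\bar M$ and $\mathbf A^{\top}\bar M=\mathbf Z\bar U$ (up to the sign convention of the off-diagonal blocks), I would first collect the structural properties I intend to exploit. The block matrix $\mathbf A$ is upper bidiagonal with identity diagonal blocks, hence invertible, and each $\mathcal A(Q^{n,k})$ has unit row sums since $\sum_{p,q}\beta_{p,q}\equiv 1$, so $\mathcal A(Q^{n,k})\mathbf 1=\mathbf 1$. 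The matrix $\mathbf W$ is block diagonal, symmetric and positive semidefinite, with its first $N_t$ diagonal blocks $\mathcal W^k=\operatorname{diag}(F'(M^{n-1,k}))$ positive definite (because $F'>0$ and $\bar M^{n-1}>0$) and its last block zero. Finally, the skew-symmetry $D_1^{\top}=-D_1$, $D_2^{\top}=-D_2$ of the periodic central differences gives $\mathcal Z^{k}=D_1\operatorname{diag}(\mathscr M)D_1+D_2\operatorname{diag}(\mathscr M)D_2$ with $\mathscr M=M^{n-1,k}>0$; hence each $\mathcal Z^k$ is symmetric negative semidefinite, and $\mathcal Z^kV=0$ if and only if $D_hV=0$, i.e.\ $V$ is constant on the (connected, periodic) grid.

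The heart of the proof is a discrete counterpart of the Lasry--Lions monotonicity identity. I would pair the first homogeneous equation with $\bar M$ and the second with $\bar U$; the first-order transport terms cancel identically because $\bar U^{\top}\mathbf A^{\top}\bar M=\bar M^{\top}\mathbf A\bar U$, which is precisely the exact discrete integration-by-parts in time encoded by using the transpose $\mathbf A^{\top}$ in the forward block. What survives is the scalar identity $\bar M^{\top}\mathbf W\bar M=\bar U^{\top}\mathbf Z\bar U$, equating a nonnegative quantity on the left to a nonpositive one on the right. Consequently both quadratic forms vanish: $\bar M^{\top}\mathbf W\bar M=0$ and $\bar U^{\top}\mathbf Z\bar U=0$. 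The positive definiteness of $\mathcal W^0,\dots,\mathcal W^{N_t-1}$ then yields $M^{n,k}=0$ for $k=0,\dots,N_t-1$, while $\mathcal Z^kV=0 \iff D_hV=0$ yields that $U^{n,k}$ is spatially constant for every $k\ge 1$.

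It then remains to propagate this partial information to full vanishing via the two-point boundary data. Writing $U^{n,k}=c_k\mathbf 1$ for $k\ge 1$, the terminal condition $U^{n,N_t}=0$ gives $c_{N_t}=0$; substituting $M^{n,k}=0$ into the backward recursion $U^{n,k}=\mathcal A(Q^{n,k})U^{n,k+1}$ and invoking $\mathcal A(Q^{n,k})\mathbf 1=\mathbf 1$ gives $c_k=c_{k+1}$, so $c_k\equiv 0$ and hence $U^{n,k}=0$ for all $k\ge 1$; the row at $k=0$ then forces $U^{n,0}=0$, so $\bar U^n=0$. Finally, the forward recursion at $k=N_t-1$, with $M^{n,N_t-1}=0$ and $U^{n,N_t}=0$ already known, gives $M^{n,N_t}=0$, completing $\bar M^n=0$. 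This establishes injectivity, hence invertibility of the coefficient matrix, hence existence and uniqueness of $(\bar U^n,\bar M^n)$.

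The step I expect to be delicate is the sign bookkeeping that makes the energy identity oppose a nonnegative against a nonpositive form, i.e.\ verifying that the discretization genuinely inherits the monotone (Lasry--Lions) structure. This rests on two points: that $\mathbf A^{\top}$ is the \emph{exact} transpose of $\mathbf A$, so that the transport contributions cancel with no consistency remainder, and that $\mathbf Z$ is symmetric negative semidefinite, which follows from the skew-symmetry of the periodic central-difference operators together with the strict positivity $\mathscr M>0$ inherited from $\bar M^{n-1}>0$. A more routine secondary obstacle is that the energy controls only $\bar M$ on the first $N_t$ time slabs and the spatial gradient of $\bar U$; recovering the remaining component $M^{n,N_t}$ and the constants $c_k$ genuinely requires the terminal/initial data together with the unit-row-sum property of $\mathcal A$.
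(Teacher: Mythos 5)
Your proposal follows the same core route as the paper's proof: consider the homogeneous system, test the first block row with $\bar M^n$ and the second with $\bar U^n$, use the exact transpose structure ($\mathbf A$ versus $\mathbf A^{\top}$) to cancel the transport contributions, and reduce everything to the energy identity \eqref{matrix_identity}. Where you diverge is the endgame. The paper asserts that each block $\mathcal Z^k$ is positive \emph{definite} (as the sum of a positive definite and a skew-symmetric matrix) and from this concludes $\bar U^n=0$, $\bar M^n=0$ directly from the vanishing of the quadratic forms, before a final back-substitution; you instead claim only semidefiniteness of $\mathbf Z$, with kernel equal to the kernel of $D_h$, and recover the remaining components by propagating through the bidiagonal recursions with the terminal/initial data. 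Your caution on this point is in fact warranted: constant vectors satisfy $D_h\mathbf 1=0$, hence $\mathcal Z^k\mathbf 1=0$ in either the divergence form \eqref{Z_matrix} or the expanded form \eqref{Zk}, so $\mathcal Z^k$ is singular and strict definiteness cannot hold; some propagation step of the kind you supply is genuinely needed, and your treatment is more careful than the paper's here. One further discrepancy worth flagging: your identity $\mathcal Z^k=D_1\operatorname{diag}(\mathscr{M})D_1+D_2\operatorname{diag}(\mathscr{M})D_2$ corresponds to the divergence form \eqref{Z_matrix}, which does not coincide exactly with the expanded definition \eqref{Zk}, because the discrete Leibniz rule is not exact for central differences; your symmetry and semidefiniteness claims are exact only under the divergence-form reading.

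There is, however, one concretely false step in your argument: the claim that $D_hV=0$ forces $V$ to be constant on the periodic grid. For central differences this fails whenever $N_h$ is even: checkerboard modes such as $V_{i,j}=(-1)^{i}$ satisfy $D_1V=D_2V=0$, and the kernel of $D_h$ consists of all grid functions depending only on the parities of $i$ and $j$. Hence ``$\bar U^{\top}\mathbf Z\,\bar U=0 \Rightarrow U^{n,k}=c_k\mathbf 1$'' is unjustified, and your $c_k$-propagation via the unit row sums of $\mathcal A(Q^{n,k})$ does not rule out these spurious modes. Fortunately the constancy step is dispensable: once the positive definiteness of $\mathcal W^{0},\dots,\mathcal W^{N_t-1}$ gives $M^{n,k}=0$ for $k\le N_t-1$, the homogeneous backward recursion $U^{n,k}=\mathcal A(Q^{n,k})U^{n,k+1}$ together with $U^{n,N_t}=0$ yields $U^{n,k}=0$ for all $k$ by downward induction, with no appeal to constancy or row sums; the forward recursion at $k=N_t-1$ then gives $M^{n,N_t}=\mathcal A(Q^{n,N_t-1})^{*}M^{n,N_t-1}+\Delta t\,\mathcal Z^{N_t}U^{n,N_t}=0$. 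With that replacement your proof is complete, and indeed does not use the semidefiniteness of $\mathbf Z$ beyond its sign in the energy identity.
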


\begin{proof}
Suppose that ${\textbf{B}}=0$ and ${\textbf{C}}=0$, then \eqref{matrix-form} reads as
\begin{equation}\label{null_system}
		\left\{	
\begin{array}{l}
\textbf{A}\,{\bar{U}^n} -\textbf{W}{\bar{M}^n}=0,  \\ -\textbf{Z}\,{\bar{U}^n}+\textbf{A}^{\top}{\bar{M}^n}=0
\end{array} 
\right.
\end{equation}
Multiplying the first equation by $(\bar M^n)^{\top}$ and the second one by $(\bar U^n)^{\top}$ one gets
\begin{equation*}	
\left\{ 	
\begin{array}{l}
(\bar{M}^n)^{\top} \textbf{A}\,{\bar{U}^n} -(\bar{M}^n)^{\top} \textbf{W}{\bar{M}^n}=0\\
 -(\bar{U}^n)^{\top} \textbf{Z}\,{\bar{U}^n}+(\bar{U}^n)^{\top} \textbf{A}^{\top}{\bar{M}^n}=0.		
	\end{array} 
	\right.
\end{equation*}
{Subtracting} both equations, we obtain
\begin{equation}\label{matrix_identity}
(\bar{M}^n)^{\top} \textbf{W}{\bar{M}^n}- (\bar{U}^n)^{\top} \textbf{Z}\,{\bar{U}^n}=0.
\end{equation}
Since $F'>0$, the block $\mathcal{W}^k$ is positive definite for all $k\in\I_{\D t}$. Moreover,  the block $\mathcal Z^k$ is negative definite for all $k\in\I_{\D t}$, since  $\bar{M}^{n-1}>0$ and it is the sum of a negative definite matrix and a skew symmetric matrix  and then, by \cite[Remark 1]{J}, it is negative definite.

Hence, it follows from \eqref{matrix_identity} that
\be
\label{bar u}
(\bar{U}^n)_{kN_h^2+iN_h+j}=0\quad\text{for all }k\in\I^*_{\D t},\, i,j\in\I_h,
\ee
and
\be
\label{bar m}
(\bar{M}^n)_{kN_h^2+iN_h+j}=0\quad\text{for all }k\in\I^*_{\D t}\, i,j\in\I_h.
\ee
Finally, replacing \eqref{bar u} and \eqref{bar m} in \eqref{null_system}, we get $\bar{U}^n= 0$ and $\bar{M}^n=0$, and then the existence of a unique solution to \eqref{matrix-form}.
\end{proof}
\begin{remark}
The assumption $F'>0$ is a standard monotonicity condition in MFG with separable Hamiltonian, which guarantees uniqueness of the solution to~\eqref{MFGg}; see~\cite[Remark~2.1]{newton}. 
In the second-order case ($\nu > 0$), the diffusion ensures that the density remains positive. At the discrete level, the condition $M^{n-1} > 0$ should be interpreted as a positivity requirement on the density at the previous Newton iterate. However, the numerical scheme does not, in general, verify this property (see Remark~\ref{posi}) and in practice this condition may fail particularly when $\nu$ is small. In Section~\ref{numerics}, we propose a framework to address this issue.
\end{remark}

\begin{remark}
Note that the blocks of $\textbf W$ would be dense matrices if $F(m(t,x))$ is replaced by a nonlocal operator $f[m(t,\cdot)]$. 
In this case, we use the notation $\frac{\delta f}{\delta m}$
for the flat derivative of $f$ (see e.g \cite{mfgnash}).
The condition $F'>0$ can then be replaced by assuming that $\frac{\delta f}{\delta m}$ is Lipschitz continuous and that $f$ satisfies
 $$
\begin{aligned}
\int_{\mathbb T^d} (f[m](x) - f[m'](x)) \, \dd(m - m')(x) &\geq 0\quad \text{for all }m,m'.
\end{aligned}
$$
A typical example is a nonlocal coupling with smoothing effect 
\[
f(x,m) = \int_{\mathbb{T}^d} \Phi(z,(\rho\ast m)(z))\rho(x-z)\dd z,
\]
where $\ast$ denotes the usual convolution on $\mathbb{T}^d$,  
$\Phi$ is a smooth and nondecreasing with respect to $m$, and $\rho$ is a smooth, even function with compact support. In this case, writing $\Phi=\Phi(x,\theta)$,  the flat derivative of $f$ reads
$$
\frac{\delta f}{ \delta m}(x, m, y) = \int_{\mathbb{T}^d} \frac{\partial\Phi}{\partial \theta}(z,(\rho\ast m)(z))\rho(x-z)\rho(z-y)\dd z.
$$
\end{remark}

\section{A finite differences scheme}
\label{upwind}

In this section, we discretize the iterative system \eqref{eq:Newton} using an implicit finite difference scheme in two dimensions and prove its well-posedness. 
The equation for $v$ is discretized  using upwind differences for the drift term and central differences for the second-order spatial derivatives. 
The equation for $m$ is then approximated by taking the adjoint of the resulting scheme.

In order to discretize the system, we define the discrete time derivative of $v\in B(\mathcal G_{\Dt}\times\G_h)$  as
\be
D_{t}v^k_{i,j}=\frac{v^{k+1}_{i,j}-v^k_{i,j}}{\Dt}\quad\text{for all }k\in\I_{\D t}^*,\;i,j\in\I_{h},
\ee
Given $q^n
\in B(\G_{\D t}\times\G_h)^2$ and $u^{n-1}, m^{n-1}, m^{n-1}\in B(\G_{\D t}\times\G_h)$, 
we discretize the first equation in \eqref{eq:Newton} using an implicit scheme finite difference scheme, which reads for all $k\in\I_{\D t}^*$ and $i,j\in\I_h$ as
\begin{equation}
\label{upwind1}
\begin{cases}
\ds -D_{t}u^{n,k}_{i,j}-\nu (L_hu^{n,k})_{i,j}+q^{n,k}_{i,j}(D_hu^{n,k})_{i,j}=&\frac{1}{2}|(q^{n,k})_{i,j}|^2+V(x_{i,j})
+\mathcal{F}_{i,j}(m^{n,k+1},m^{n-1,k+1}),\\[6pt]
u^{n,T}_{i,j}=G(x_{i,j}),
\end{cases}
\end{equation}
 where
$L_h$ is thediscrete Laplace operator, already defined in Section \ref{sec:Newton}, and
for every  $\zeta,\tilde \zeta\in B(\G_h)$ 
$$
\mathcal{F}_{i,j}(\zeta,\tilde\zeta)=F'(\tilde \zeta_{i,j})(\zeta_{i,j}-\tilde \zeta_{i,j})+F(\tilde \zeta_{i,j}).
$$
Then, we consider the following approximation of the forward equation in \eqref{eq:Newton}
\begin{equation}
\label{upwind2}
\begin{cases}
    D_{t}m^{n,k}_{i,j}-\nu(L_hm^{n,k+1})_{i,j}-(\diver_h(q^{n,k}m^{n,k+1}))_{i,j}=\diver_h(m^{n-1,k+1}(D_hu^{n,k}-D_hu^{n-1,k}))_{i,j},\\[6pt]
m^{n,0}_{i,j}=m_0(x_{i,j}).
\end{cases}
\end{equation}
\begin{remark}
Notice that, qiven $q
\in B(\G_{\D t}\times\G_h)^2$, the operator $\mathcal B(\G_h)\ni m\to(-\nu(L_h m)-(\diver_h(qm))\in\mathcal B(\G_h)$ is the adjoint of the operator $\mathcal B(\G_h)\ni u\to(-\nu(L_hu)+q\,D_h u)\in \mathcal B(\G_h)$.
\end{remark}

Combining \eqref{upwind1} and \eqref{upwind2}, and using the vector notations \eqref{eq:defU}-\eqref{eq:defQ}, we get the following fully discrete upwind scheme for the Newton iterations system \eqref{eq:Newton}
\begin{equation}
\label{form_matrix}
    \begin{cases}
   \mathcal D^{k}U^{n,k}=U^{n,k+1}+\D t\mathcal W^{k+1}M^{n,k+1}+\D t\tilde{\mathcal B}^{k+1}\quad &\text{for }k\in\I^*_{\D t},\\[4pt]
    (\mathcal D^{k})^{\top}M^{n,k+1}=M^{n,k}+\D t\mathcal Z^{k}U^{n,k}+\D t\tilde{\mathcal C}^{k+1}\quad &\text{for }k\in\I^*_{\D t},\\[4pt]
    U^{n,N_t}=U^T,\\[4pt]
    M^{n,0}=M^0,
    \end{cases}
\end{equation}
where $\mathcal D^k$ is the ${N_h^2}\times{N_h^2}$ matrix
$$\mathcal D^k=\mathcal{I}-\D t(\nu L_h + Q_1D_1+Q_2D_2),$$
and $\tilde{\mathcal C}^{k+1}, \tilde{\mathcal B}^{k+1}$ are vectors in  $\mathbb{R}^{N_h^2}$  defined as
\begin{equation*}
\begin{split}
(\tilde {\mathcal C}^{k+1})_{i+N_hj}&=-(\mathcal{Z}^{k+1} U^{n-1,k})_{i+N_hj},\\
    (\tilde{\mathcal B}^{k+1})_{i+N_hj}&=\frac{|(q^{n,k})_{i,j}|^2}{2}-V(x_{i,j})\\
		&\;+F((M^{n-1,k+1})_{i+N_hj})-F'((M^{n-1,k+1})_{i+N_hj})(M^{n-1,k+1})_{i+N_hj}\quad
	\text{for }i,j\in\I_{h}.
\end{split}
\end{equation*}

Finally, define $\tilde{U}$ and $\tilde{M}\in\RR^{(N_t)N_h^2}$ by
\begin{equation*}
\begin{split}
(\tilde U^n)_{kN_h^2+iN_h+j}&=( U^{n,k})_{i+N_hj}\quad\text{for }k\in\I^*_{\D t},\, i,j\in\I_h,\\
(\tilde M^n)_{kN_h^2+iN_h+j}&=( M^{n,k})_{i+N_hj}\quad\text{for }k\in\I_{\D t}\setminus\{0\},\, i,j\in\I_h.
\end{split}
\end{equation*}

In order to write system \eqref{form_matrix} in a matrix way, as in \eqref{matrix-form}, we define first the matrices
$$
\tilde{\textbf A}=
\left[ \begin{matrix}
	\mathcal D^0 & -\mathcal{I} &\mathcal{O} &\cdots & \mathcal{O} \\
		\mathcal{O}& 	\mathcal D^1 & \ddots & \ddots& \vdots  \\
		\vdots & \ddots & \ddots & \ddots&\vdots  \\
		\vdots  &\ddots  &\ddots  &\ddots &- \mathcal{I} \\
		\mathcal{O}  &\cdots  & \cdots & \mathcal{O} &	\mathcal D^{N_t-1}
	\end{matrix} \right],\quad
\tilde{\textbf{W}}=\D t	\left[\begin{matrix}
\mathcal W^1 & \mathcal{O} & \cdots &\cdots & \mathcal{O} \\
\vdots & \mathcal W^2 & \ddots &\ddots & \mathcal{O} \\
\mathcal{O} & \ddots &\ddots& \ddots & \vdots \\
\vdots & \ddots & \ddots &\ddots&  \vdots \\
\mathcal{O} & \cdots & \cdots & \cdots & \mathcal W^{N_t}\\
\end{matrix}\right],
$$
and
$$
\tilde{\textbf{Z}}=\D t	\left[\begin{matrix}
\mathcal Z^0 & \mathcal{O} & \cdots &\cdots & \mathcal{O} \\
\mathcal{O} & \mathcal Z^1 & \ddots &\ddots & \mathcal{O} \\
\vdots & \ddots &\ddots& \ddots & \vdots \\
\vdots & \ddots & \ddots &\ddots&  \vdots \\
\mathcal{O} & \mathcal{O} & \cdots & \cdots & \mathcal Z^{N_t-1}\\
\end{matrix}\right].
$$
Hence, \eqref{form_matrix} is equivalent to the  system
{\begin{equation}
\label{matrix_form_fd}
	\left[
	\begin{matrix}
		\tilde{\textbf A}  &- {\tilde {\textbf{W}}}\\[6pt]
		- {\tilde {\textbf{Z}}} &\tilde{\textbf A}^*
	\end{matrix}\right]\left[
	\begin{matrix}
		\tilde{U}^{n}\\[6pt]
		\tilde{M}^n
	\end{matrix}\right]=
	\left[
	\begin{matrix}
	\tilde{\textbf{B}}\\[6pt]
	\tilde{\textbf{C}}
	\end{matrix}\right],
\end{equation}}
where $\tilde{\textbf{B}}=\D t\Big[\mathcal B^1,\dots,\mathcal B^{N_t}+\frac{1}{\D t}U^{N_t}\Big]^{\top}$ and $\tilde{\textbf{C}}=\D t\Big[\frac{1}{\D t}M^0+\mathcal C^1,\dots, \mathcal C^{N_t}\Big]^{\top}$. Then, arguing as in the proof of Proposition \ref{propcons}, one can show the following well-posedness result.
\begin{proposition}
Suppose that $\tilde M^{n-1}>0$ for any $n\in\NN$. Then, there exists a unique solution $(\tilde U^n,\tilde M^n)$ to the system \eqref{matrix_form_fd}.
\end{proposition}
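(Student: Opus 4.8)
The plan is to reproduce, essentially verbatim, the energy argument from the proof of Proposition~\ref{propcons}, the only difference being the internal structure of the diagonal blocks. Since \eqref{matrix_form_fd} is a square linear system, it suffices to prove that the corresponding homogeneous system admits only the trivial solution; the boundary data $U^{N_t}$ and $M^0$ have been absorbed into $\tilde{\textbf B}$ and $\tilde{\textbf C}$, so they disappear once the right-hand side is set to zero. I would therefore first put $\tilde{\textbf B}=\tilde{\textbf C}=0$ and write the two block rows of \eqref{matrix_form_fd} as $\tilde{\textbf A}\,\tilde U^n-\tilde{\textbf W}\tilde M^n=0$ and $-\tilde{\textbf Z}\,\tilde U^n-\tilde{\textbf A}^{*}\tilde M^n=0$, in exact analogy with \eqref{null_system}.

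Next I would test the first row against $(\tilde M^n)^{\top}$ and the second against $(\tilde U^n)^{\top}$, and add the two scalar identities. The crucial observation is that the two cross terms produced by $\tilde{\textbf A}$ cancel: since $(\tilde U^n)^{\top}\tilde{\textbf A}^{*}\tilde M^n$ is a scalar, it equals its own transpose $(\tilde M^n)^{\top}\tilde{\textbf A}\,\tilde U^n$, which is precisely the term generated by the first row. This cancellation uses only that the off-diagonal blocks are $-\tilde{\textbf W}$ and $-\tilde{\textbf Z}$ and that the lower-right block is $-\tilde{\textbf A}^{*}=-\tilde{\textbf A}^{\top}$; it is completely insensitive to the fact that the diagonal blocks of $\tilde{\textbf A}$ now carry the implicit operators $\mathcal D^k$ rather than the identity. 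One is thus left with the exact analogue of \eqref{matrix_identity}, namely $(\tilde M^n)^{\top}\tilde{\textbf W}\tilde M^n+(\tilde U^n)^{\top}\tilde{\textbf Z}\,\tilde U^n=0$.

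It then remains to invoke positive definiteness of both quadratic forms. Under the monotonicity assumption $F'>0$, each diagonal block $\mathcal W^k$ in \eqref{Wk} is positive definite, whence $\tilde{\textbf W}$ is positive definite; and under the hypothesis $\tilde M^{n-1}>0$, each block $\mathcal Z^k$ in \eqref{Zk} is, exactly as in Proposition~\ref{propcons}, the sum of a positive definite matrix and a skew-symmetric one, hence positive definite by \cite[Remark~1]{J}, so $\tilde{\textbf Z}$ is positive definite. As both terms are nonnegative and sum to zero, each vanishes, forcing $\tilde U^n=0$ and $\tilde M^n=0$. Injectivity of a square matrix then yields both existence and uniqueness of $(\tilde U^n,\tilde M^n)$ for arbitrary $\tilde{\textbf B},\tilde{\textbf C}$.

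I expect no serious difficulty, as the computation is structurally identical to Proposition~\ref{propcons}; the only points deserving care are (i) confirming that the cross-term cancellation survives the replacement of the identity diagonal blocks by the operators $\mathcal D^k$ --- which it does, being purely a consequence of the $\pm$ sign pattern and of $\tilde{\textbf A}^{*}=\tilde{\textbf A}^{\top}$ --- and (ii) making explicit that $F'>0$ must be kept as a standing hypothesis, since the statement lists only $\tilde M^{n-1}>0$ yet positivity of $\tilde{\textbf W}$ is exactly what is needed for the energy identity to close.
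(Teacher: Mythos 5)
Your proposal is correct and is exactly the paper's intended argument: the paper proves this proposition simply by the phrase ``arguing as in the proof of Proposition~\ref{propcons}'', i.e.\ the same energy identity $(\tilde M^n)^{\top}\tilde{\textbf W}\tilde M^n+(\tilde U^n)^{\top}\tilde{\textbf Z}\,\tilde U^n=0$ for the homogeneous system, with positive definiteness of the blocks $\mathcal W^k$ and $\mathcal Z^k$ forcing the trivial solution. Your observation in (ii) is also apt: the statement omits $F'>0$, yet the argument (both yours and the paper's) needs it for positivity of $\tilde{\textbf W}$, so it must indeed be read as a standing monotonicity assumption carried over from Proposition~\ref{propcons}.
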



\section{Numerical tests}
\label{numerics}
In this section, we assess the performance of the proposed numerical schemes  through a series of tests in both one and two spatial dimensions, 
considering cases with separable and non-separable Hamiltonians. 
In all tests, the Newton iterations are terminated when the following error metrics 
fall below a prescribed tolerance~$\tau$, set to $10^{-4}$ in all the experiments.
Specifically, we monitor the relative variation between two consecutive Newton iterates 
for both the value function and the distribution, according to:
\begin{equation}
\label{err}
E(m^n):=\|m^{n+1}-m^n\|_\infty \quad \text{and} \quad E(u^n):=\|u^{n+1}-u^n\|_\infty.
\end{equation}
The systems defined by equations \eqref{matrix-form} and \eqref{matrix_form_fd} 
are solved using Block Gauss--Seidel iterations. 
The iterations are terminated once the uniform norms of the differences between consecutive solutions 
fall below a prescribed threshold $\delta$, which is set to $10^{-4}$. 
This ensures comparable accuracy for both discretization procedures.
We compare three approaches for solving system \eqref{MFGg}: the semi-Lagrangian scheme \eqref{fully_newton} ({Newton--SL}), 
the finite difference scheme \eqref{form_matrix} ({Newton--FD}), 
and the finite difference schemes from \cite{finitedifference1} solved via Newton iterations ({FD--Newton}). 
Algorithm~\ref{algo_FD} implements the Newton method for both Newton--SL and Newton--FD.
We note that the semi-Lagrangian scheme used in this work is explicit and does not require a parabolic CFL condition, 
whereas the finite difference schemes are implicit and thus impose no restriction on the time step. 
For accuracy purposes, the semi-Lagrangian scheme is run with a time step of the form $\Delta t = O(h^{3/2})$ 
(see \cite{Ferretti1,Ferretti2} for a detailed analysis).
We now present a series of four numerical tests designed to illustrate the behavior and performance of the proposed Newton-based schemes. 
The first test considers a one-dimensional MFG with a known reference solution, allowing us to validate the accuracy of the schemes and compare the performance of Newton--SL, Newton--FD, and FD--Newton. 
The second test focuses on one-dimensional MFG with smooth initial and terminal conditions, considered for two different diffusion coefficients ($\nu = 0.4$ and $\nu = 0.02$), highlighting the stability and robustness of the schemes. 
In the small-diffusion case ($\nu = 0.02$), we observe a break-down of the standard Newton iterations for the finite-difference schemes (Newton--FD and FD--Newton) due to a poor initialization, whereas Newton--SL remains stable. 
To overcome this difficulty, we employ a globalized version of Newton's method, which is activated whenever the local iteration fails; this global strategy is described in detail in the following subsection. 
Given the robust and accurate performance of the Newton--SL scheme observed in the tests 1--2, 
the subsequent experiments involving non-separable Hamiltonian
are conducted using only this discretization approach.
The third test examine one-dimensional MFG system with non-separable Hamiltonian, and finally the fourth test examine a two-dimensional MFG systems with a separable Hamiltonian. 
Both tests demonstrate the flexibility and robustness of the Newton--SL scheme in handling more general and nonlinear Hamiltonian structures.
Together, these tests provide a comprehensive evaluation of the schemes, highlighting their accuracy, efficiency, and stability across different problem types, dimensions, and diffusion regimes.

\begin{algorithm}[H]
\caption{Local Newton iterations for mean field games}
\label{algo_FD}
\begin{algorithmic}[1]
\State \textbf{Input}: Initial guesses $ u^0$, $ m^0$, and tolerance $\tau$ 
    \State $q^0\gets D_hu^0$ 

\State $n\gets 0$
   \Repeat
         \State Compute $(m^{n+1}, u^{n+1})$ as the solution of~\eqref{fully_newton} (Newton--SL) or~\eqref{form_matrix} (Newton--FD) 

        \State $E(m^n) \gets \| m^{n+1} -  m^{n}\|_\infty $
        \State $E(u^n) \gets \| u^{n+1} -  u^{n}\|_\infty $
        \State Update $q^n\gets  D_hu^n$  
        \State $n \gets n + 1$
      \Until{ $E(m^n) \geq  \tau$ \textbf{or} $E(u^n) \geq \tau$}

    \State \textbf{Output}: Approximated solution $(u^{n+1},m^{n+1})$ to the MFG system \eqref{MFGg}
\end{algorithmic}
\end{algorithm}

\begin{remark}[Block Gauss--Seidel for the discrete Newton system\eqref{matrix-form}  ]

To construct a Block Gauss--Seidel iteration, we perform a splitting of the Newton matrix \eqref{matrix-form} as
\begin{equation}
\label{eq:block_splitting_correct}
M = \begin{bmatrix} \mathbf A & \mathbf0 \\ -\mathbf{Z} & \mathbf A^\top \end{bmatrix}, \qquad
N = \begin{bmatrix} \mathbf0 & \mathbf W \\ \mathbf 0 & \mathbf0 \end{bmatrix}.
\end{equation}
The corresponding Block Gauss--Seidel iteration is, for $\ell\geq0$,
\begin{equation}
\label{eq:block_GS_correct}
X^{(\ell+1)} = M^{-1} N X^{(\ell)} + M^{-1} b, \qquad 
X ^{(\ell)}= \begin{bmatrix} \bar U^{(\ell)} \\ \bar M ^{(\ell)}\end{bmatrix}, \quad b = \begin{bmatrix} \mathbf B \\ \mathbf C \end{bmatrix}.
\end{equation}
given an initial condition $X^{(0)}$.
\medskip

\noindent
Each diagonal block of $M$ corresponds to the natural temporal propagation of $U$ backward in time and $M$ forward in time, while the coupling terms $\mathbf W$ and $\mathbf Z$ are contained in $N$ and in the bottom-left block of $M$ respectively. 
The iteration can be implemented as a \emph{backward-forward sweep}:

\begin{enumerate}
    \item \emph{Backward sweep for $U$:}
    \[
    U^{n,N_t}_{(\ell+1)} = U^{N_t}, \quad 
    U^{n,k}_{(\ell+1)} = \mathcal A(Q^{n,k}) U^{n,k+1}_{(\ell+1)} + \Delta t \, \mathcal W^k M^{n,k}_{(\ell)} + \Delta t \, \mathcal B^k, 
    \quad k = N_t-1,\dots,0.
    \]
    \item \emph{Forward sweep for $M$:}
    \[
    M^{n,0}_{(\ell+1)} = M^0, \quad 
    M^{n,k+1}_{(\ell+1)} = \mathcal A^*(Q^{n,k}) M^{n,k}_{(\ell+1)} + \Delta t \, \mathcal Z^{k+1} U^{n,k+1}_{(\ell+1)} + \Delta t \, \mathcal C^{k+1}, 
    \quad k = 0,\dots,N_t-1.
    \]
\end{enumerate}

\medskip
\noindent 
The block-diagonal matrix $M$ is invertible because $\mathbf A$ and $\mathbf A^\top$ are invertible. The convergence of the Block Gauss--Seidel is guaranteed if 
$\rho( (\mathbf A^\top)^{-1}\mathbf Z \mathbf A^{-1}\mathbf W)<1$.
It can be shown that $\|\mathbf A^{-1}\|_{\infty}\leq CN_t$,
$\|\mathbf W\|_{\infty}\leq C\Delta t$ and that 
$
\|\mathbf{Z}\|_\infty 
= O\!\left(\frac{\Delta t}{h^2}\right),
$
however these bounds are not sufficient condition to ensure convergence, and it seems difficult to improve these estimates in general. 
Nonetheless, in all our numerical experiments, the Block Gauss--Seidel method converges systematically when the discrete density preserves positivity.


A comparison with GMRES and BiCGSTAB is reported in Table~\ref{tab:solver_comparison} of Section~\ref{sec:test2_1}. 
The results show that all the methods maintain convergence, however, Block Gauss--Seidel needs the lowest CPU time and the least number of iterations.
This behavior indicates that the effective spectral radius of the iteration matrix remains strictly smaller than one in the tested regime. 
\end{remark}


\noindent

\begin{remark}
For the {semi-Lagrangian scheme}, we approximate the drift term \( q^n \) using the following discrete formulation:
\[
q^{n,k}_{i,j} = \left( D\rho_\varepsilon * u^{n-1,k} \right)_{i,j}\quad \text{for all } k \in \mathcal{I}_{\Delta t},\, i,j \in \mathcal{I}_h,
\]
where \( * \) denotes the discrete circular convolution between two grid functions \( A, B \in \mathbb{R}^{N_h \times N_h} \) (see e.g \cite[Chapter 8]{oppenheim2010discrete}), and, for $\eps>0,$ the kernel \( \rho_\varepsilon \) is a Gaussian mollifier defined by
\[
\rho_\varepsilon(x) = \frac{1}{\sqrt{2\pi}\varepsilon} e^{-\frac{|x|^2}{2\varepsilon^2}},\quad \text{for all }x\in\RR^d.
\]
This approach, which is common in the framework of SL schemes (e.g see \cite{CSorder2,carlini2023lagrangegalerkin}),  enhances the accuracy of the scheme and allows us to avoid the constraints of a parabolic CFL condition, which would otherwise limit the time step.
For the {finite difference scheme}, we use instead the direct formulation:
\[
q^{n,k}_{i,j}  = (D_h u^{n-1,k})_{i,j} \quad \text{for all } k \in \mathcal{I}_{\Delta t},\, i,j \in \mathcal{I}_h,
\]
where \( D_h \) is given by \eqref{gradient_fd}.
\end{remark}

\subsection{Global Newton algorithm}\label{sec:GN}
In this section, we describe the global Newton method, which extends the local convergence properties of the standard Newton method to a global setting.  The key idea behind this approach   is to modify the Newton step when the current iterate is far from the solution, ensuring that each iteration makes sufficient progress. The global convergence of the Newton method in infinite dimensions is proved in \cite{global-newton}.
We adapt this framework to our MFG problem to enhance robustness and guarantee convergence even when starting from rough or distant initial guesses.
First, we define the \textit{search direction} $d^n:=(v^n,\rho^n)$ as the solution to the linear system 
\begin{equation}
\label{eq:Newton_it}
	J\cF(u^{n},m^{n})d^n=- \cF(u^{n},m^{n}).
\end{equation}
Then, the globalized Newton method generates sequences $\{(u^n, m^n)\}$, $\{d^n\}$, and $\{\alpha^n\}$, related by the iteration:
$$
(u^{n+1}, m^{n+1}) = (u^{n}, m^{n}) + \alpha^n d^n, \quad n\in\mathbb{N},
$$
where the step length $\alpha^n > 0$ is determined by a \textit{line-search} procedure using a suitably defined merit function. 
Following \cite{global-newton}, we use the squared $L^2$-norm of $\mathcal{F}$ as the merit function:
\be
\label{merit_f}
\ba{rcl}
\ds
\Theta(u,m):=\frac{1}{2}\|\mathcal{F}(u,m)\|_2^2&=&\int_{[0,T]\times \T^d}|-\partial_tu-\nu\Delta u+H(x,Du)-F(m(t,x)) |^2\dd x\dd t\\[10pt]
&+&\int_{[0,T]\times \T^d}|\partial_tm -\nu\Delta  m-\diver(mD_pH(x,Du)|^2\dd x\dd t\\[10pt]
&+&\int_{\mathbb T^d}|u(T,x)-G(x)|^2\dd x+\int_{\mathbb T^d}|m(0,x)-m_0(x)|^2\dd x.
\ea
\ee
A reasonable solution for the system should correspond to a root of the merit function, i.e., $\Theta(u, m) = 0$, which implies that $\mathcal{F}(u, m) = 0$ is satisfied, meaning that the system of equations is fulfilled.
The step length $\alpha^n$ is determined by enforcing the \textit{Armijo condition}, which guarantees a sufficient decrease of the merit function at each iteration. Specifically, this condition requires
$$
\Theta(u^{n+1}, m^{n+1}) \leq \Theta(u^n, m^n) + c \alpha^n \, \Theta'(u^n, m^n)(d^n),
$$
where  $0 < c < 1/2$  is a constant controlling the sufficient decrease, and $\Theta'$ denotes the Fréchet derivative of $\Theta$.  
Moreover, when the direction $d^n$ satisfies the Newton equation~\eqref{eq:Newton_it}, it holds that
\begin{equation}
\label{merit}
    \Theta'(u^n, m^n)d^n = -2\,\Theta(u^n, m^n) = -\|\mathcal{F}(u^n, m^n)\|^2.
\end{equation}
The global Newton procedure for solving \eqref{MFGg} is summarized in Algorithm~\ref{globalnewton}. Notice that the Newton step \eqref{eq:Newton_it} leads to the linear parabolic system for $d^n=(v^n,\rho^n)$, as given in \eqref{eq:Newton}. At the discrete level, the search direction $d^n$ can be computed using either the Newton-SL or the Newton-FD scheme. 

\begin{algorithm}[H]
\caption{Global Newton iterations for mean field games}
\label{globalnewton}
\begin{algorithmic}[1]
\State \textbf{Input}: Initial guesses $ u^0$, $ m^0$, parameters $\beta \in (0, 1)$, $c\in (0, \frac{1}{2})$, and tolerance $\tau>0$ 
    \State $q^0\gets D_hu^0$ 
\State $n\gets 0$
   \Repeat
   \State Compute $d^{n}= (m^{n+1/2}, u^{n+1/2})$ s the solution of~\eqref{fully_newton} (Newton--SL) or~\eqref{form_matrix} (Newton--FD)  
        \State Find the smallest integer $i_n \geq 0$ such that the step length $\alpha_n = \beta^{i_n}$ satisfies the Armijo condition:
            \begin{equation*}
            \Theta((m^{n}, u^{n}) + \alpha_n d^n) \leq \Theta(m^{n}, u^{n}) + c \alpha^n \Theta'(m^{n}, u^{n})d^n
           \end{equation*}
    \State Update the iteration
            $ (m^{n+1}, u^{n+1})\gets   (m^{n}, u^{n}) + \alpha_n d^n $
        \State $E(m^n) \gets \| m^{n+1} -  m^{n}\|_\infty $
        \State $E(u^n) \gets \| u^{n+1} -  u^{n}\|_\infty $
        \State Update $q^n\gets  D_hu^n$  
        \State $n \gets n + 1$
      \Until{ $E(m^n) \geq  \tau$ \textbf{or} $E(u^n) \geq \tau$}
    \State \textbf{Output}: Approximated solution $(u^{n+1},m^{n+1})$ to the MFG system \eqref{MFGg}
\end{algorithmic}
\end{algorithm}

\begin{remark}
The globalized Newton method typically requires more iterations than the local Newton method. Therefore, in our numerical experiments, we first apply the local Newton iterations, and only if they fail to converge, we switch to the globalized version.
\end{remark}

\subsection{Test 1: Accuracy Comparison for an MFG Problem with Reference Solution}
In this test (see \cite{popov}), we consider the MFG system \eqref{MFGg} on the time-space domain $[0,0.05]\times]0,1[$ with periodic boundary conditions, $\nu=0.05$, and $H(x,p)=\frac{|p|^2}{2}$. The coupling term is giving by 
\begin{equation*}
F(m(x))=4\min(4,m)-3m_0(x),\quad \text{for all } x\in ]0,1[,
\end{equation*}
while the initial condition is given by
\begin{equation*}
m_0(x)=
\begin{cases}
4\sin^2(2\pi(x-{1}/4))\quad &\text{if } x\in[{1}/4,{3}/4],\\[6pt]
0 &\text{otherwise},
\end{cases}    
\end{equation*}
and the terminal condition $G$ is set to  to zero.
To evaluate the accuracy errors, we compare the approximate solutions with a reference solution computed using a high-order value iteration scheme~\cite{Calzola_Carlini_Silva_lg_second_order} on a fine mesh with spatial step $h = 6.67 \times 10^{-4}$ and time step $\Delta t = h^{3/2}$. 
The accuracy of each scheme is assessed by computing the error in the discrete uniform norm. 
In our simulations, we set $\Delta t = h^{3/2} / 2$ for the Newton--SL method, and $\Delta t = h / 4$ for both Newton--FD and FD--Newton. It is worth noting that the semi-Lagrangian scheme is unconditionally stable, and the chosen time step is primarily dictated by accuracy considerations; in particular, we adopt the same time step used in~\cite{Calzola_Carlini_Silva_lg_second_order}, from which the reference solution is taken. 
On the other hand, the finite-difference scheme is implicit, allowing for relatively large time steps without any CFL restriction, so that the time step selection in this case is also driven by accuracy rather than stability constraints.

Table~\ref{table} reports the numerical results obtained for different grid resolutions $h$, including the errors, CPU times, and number of Newton iterations for the three methods under consideration. 
The discrete uniform errors are defined as
$$E_{\infty}(u) = \|u^n - u_{\mathrm{ref}}\|_{\infty}, \qquad 
E_{\infty}(m) = \|m^n - m_{\mathrm{ref}}\|_{\infty},
$$
where $(u_{\mathrm{ref}}, m_{\mathrm{ref}})$ denotes the reference solution, and $(u^n, m^n)$ denotes the numerical solution obtained by the Newton method at the iteration when the stopping criterion is satisfied.

We first observe that the Newton--SL approach requires fewer iterations and consistently less computational time compared to the two finite-difference-based schemes, while achieving an accuracy comparable to theirs. 
Between FD--Newton and Newton--FD, the results are very close in terms of both accuracy and CPU time, with Newton--FD showing a slight advantage in efficiency.

\begin{table}[htb!]
\centering
\small
\begin{tabular}{ |p{2cm}||p{2cm}||p{2cm}||p{1.4cm}||p{1.8cm}|}
 \hline
 \multicolumn{5}{|c|}{Newton-SL with $\D t=h^{3/2}/2$} \\[4pt]
 \hline
 $h$ & $E_\infty(u)$ & $E_\infty(m)$ & Time &  Iterations\\[4pt]
 \hline
2.50 $\cdot 10^{-2}$   & 5.51 $\cdot 10^{-2}$  & 1.64 $\cdot 10^{-1}$  & 0.61s & 6\\[4pt]
1.25 $\cdot 10^{-2}$  & 2.40 $\cdot 10^{-2}$   & 1.16 $\cdot 10^{-1}$ & 2.77s & 7\\[4pt]
6.25 $\cdot 10^{-3}$ & 1.83 $\cdot 10^{-2}$  & 6.61 $\cdot 10^{-2}$ & 13.92s & 7\\[4pt]
3.125 $\cdot 10^{-3}$   & 4.50 $\cdot 10^{-3}$  & 1.41 $\cdot 10^{-2}$ &80.60s & 7\\[4pt]
\hline
 \multicolumn{5}{|c|}{FD-Newton with $\D t=h/4$} \\[4pt]
 \hline
 $h$ & $E_\infty(u)$ & $E_\infty(m)$ & Time & Iterations\\[4pt]
 \hline
2.50 $\cdot 10^{-2}$   & 1.23 $\cdot 10^{-1}$  & 3.11 $\cdot 10^{-2}$  & 2.23s & 7\\[4pt]
1.25 $\cdot 10^{-2}$  & 6.21 $\cdot 10^{-2}$   & 1.63 $\cdot 10^{-2}$ & 18.32s & 8\\[4pt]
6.25 $\cdot 10^{-3}$ & 3.14 $\cdot 10^{-2}$  & 8.75 $\cdot 10^{-3}$ &  92.91s & 8\\[4pt]
3.125 $\cdot 10^{-3}$   & 1.77 $\cdot 10^{-2}$  & 9.54 $\cdot 10^{-3}$ & 597.21s & 8\\[4pt]
 \hline
 \multicolumn{5}{|c|}{Newton-FD with $\D t=h/4$} \\[4pt]
 \hline
 $h$ & $E_\infty(u)$ & $E_\infty(m)$ & Time &  Iterations\\[4pt]
 \hline
2.50 $\cdot 10^{-2}$   & 1.532 $\cdot 10^{-1}$  & 3.42 $\cdot 10^{-2}$  & 1.48s & 7\\[4pt]
1.25 $\cdot 10^{-2}$  & 6.71 $\cdot 10^{-2}$   & 1.83 $\cdot 10^{-2}$ & 12.27s & 7\\[4pt]
6.25 $\cdot 10^{-3}$ & 3.37 $\cdot 10^{-2}$  & 9.51 $\cdot 10^{-3}$ & 68.10s & 7\\[4pt]
3.125 $\cdot 10^{-3}$   & 1.91 $\cdot 10^{-2}$  & 7.38 $\cdot 10^{-3}$ & 436.01s & 7\\[4pt]
 \hline
 \end{tabular}
\vspace{0.2cm}
\caption{Test~1. Comparison of Newton--SL, FD--Newton, and Newton--FD schemes: errors, CPU times, and number of iterations for the solution $(u, m)$.}
\label{table}
\end{table}

\subsection{Test 2: a MFG problem varying the diffusion}
In this test, we consider two MFG systems, also numerically studied in~\cite{litest}, defined on the time--space domain $[0, 0.01] \times (0,1)$ with periodic boundary conditions and diffusion coefficients $\nu = 0.4$ and $\nu = 0.02$, respectively.
The Hamiltonian is given by $H(x,p) = \tfrac{|p|^2}{2}-V(x)$, and the corresponding data, illustrated in Figure~\ref{data}, are
\begin{equation*}
\ba{rcl}
m_0(x)&=& 1+\frac{1}{2} \cos(2\pi x),\\
G(x)&=&\sin(4\pi x)+0.1\cos(10\pi x),\\
V(x)&=&200\cos(2\pi x)-10\cos(4\pi x),\\
F(m)&=&m^2.
\ea    
\end{equation*}
The spatial and temporal discretization parameters are set to $h = 6.25 \cdot 10^{-3}$ and $\Delta t = h/4$ for the two finite-difference schemes, while $\Delta t = h^{3/2}/2$ is used for the Newton--SL method.

The purpose of this test is to assess the performance of the proposed methods in both a highly diffusive regime and a nearly deterministic setting, and to investigate how the diffusion parameter influences the convergence behavior and stability of the Newton iterations.
Furthermore, we provide a comparative analysis against standard iterative solvers to evaluate the relative computational efficiency and robustness of our approach across these different regimes.

\begin{figure}[htb!]
    \centering
    \begin{subfigure}[b]{0.32\textwidth}
        \includegraphics[width=\textwidth]{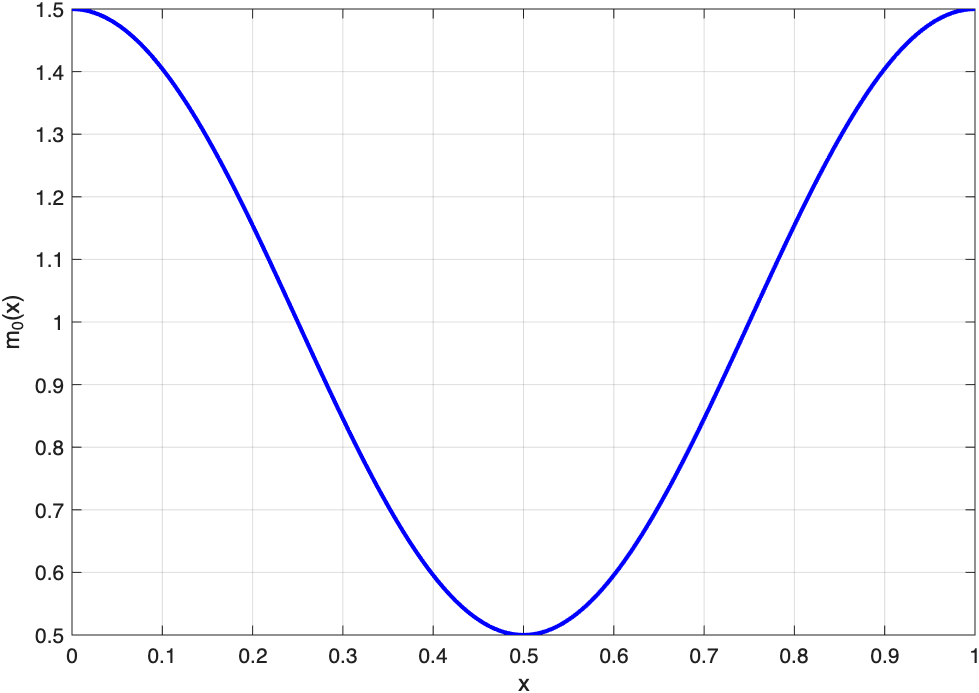}
        \caption{Initial distribution $m_0$}
        \label{fig:surb1}
    \end{subfigure}
    \begin{subfigure}[b]{0.32\textwidth}
        \includegraphics[width=\textwidth]{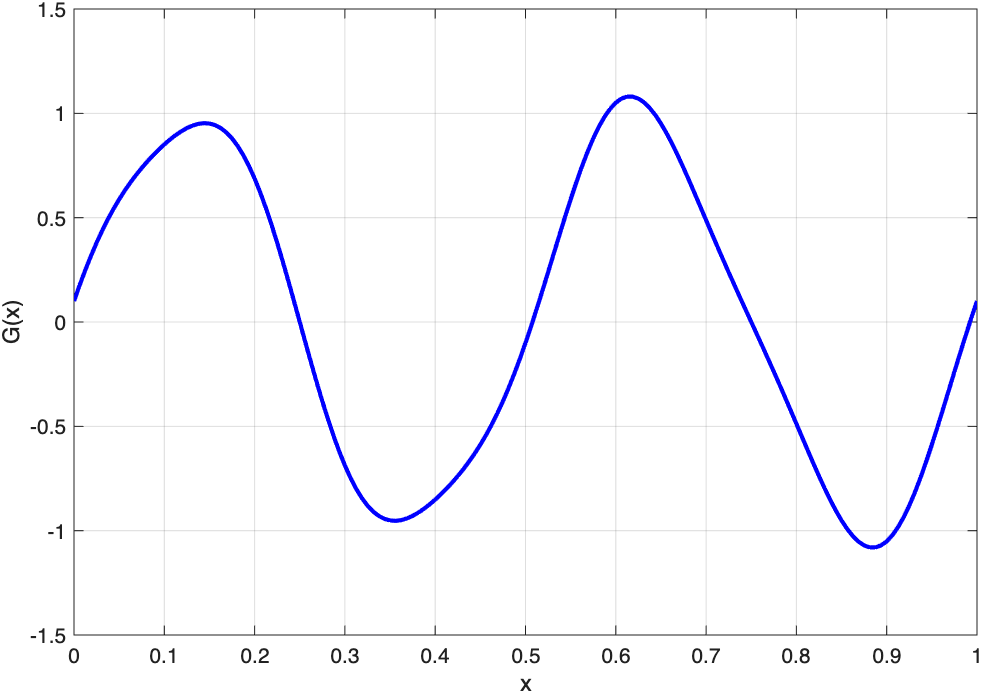}
        \caption{Terminal cost $G$}
        \label{fig:subr2}
    \end{subfigure}
    \begin{subfigure}[b]{0.32\textwidth}
        \includegraphics[width=\textwidth]{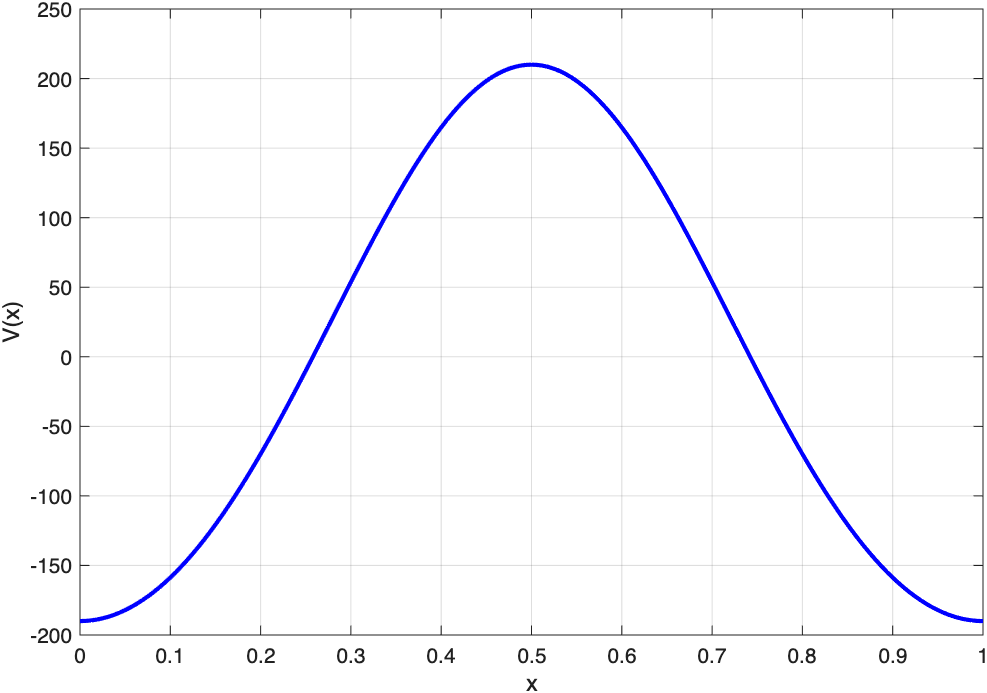}
        \caption{Potential $V$}
        \label{poteniall}
    \end{subfigure}
    \caption{Test 2. The initial data.}
    \label{data}
\end{figure}

\subsubsection{Performance Analysis of Linear Solvers}
At each Newton step, the linear system~\eqref{matrix-form} is solved using three different iterative solvers: Block Gauss--Seidel, GMRES, and BiCGSTAB. This comparison is performed for both the diffusive regime ($\nu = 0.4$) and the nearly deterministic setting ($\nu = 0.02$) to ensure the robustness of the solver selection.

Table~\ref{tab:solver_comparison} reports the corresponding iteration counts and CPU times for both cases. In the case $\nu = 0.4$, all three methods satisfy the stopping criterion in 6 iterations. However, as the diffusion parameter decreases to $\nu = 0.02$, the Krylov subspace methods (GMRES and BiCGSTAB) require significantly more iterations and computational time, whereas the Block Gauss--Seidel method remains highly efficient. Across both regimes, Block Gauss--Seidel yields the lowest CPU time. Consequently, the results presented in the remainder of this study correspond to the Block Gauss--Seidel solver.

\begin{table}[htb!]
\centering
\begin{tabular}{l c c c c c}
\hline
& \multicolumn{2}{c}{$\nu = 0.4$} & & \multicolumn{2}{c}{$\nu = 0.02$} \\
\cline{2-3} \cline{5-6}
Solver & Iterations & CPU (s) & & Iterations & CPU (s) \\
\hline
Block Gauss--Seidel & 6 & 0.054 & & 8  & 0.946 \\
GMRES               & 6 & 4.251  & & 13 & 5.914  \\
BiCGSTAB            & 6 & 1.832  & & 13 & 4.382  \\
\hline
\end{tabular}
\caption{Iteration counts and CPU time for the different solvers across both diffusion regimes ($\nu=0.4$ and $\nu=0.02$).}
\label{tab:solver_comparison}
\end{table}

\subsubsection{Test 2.1 with $\nu= 0.4$}\label{sec:test2_1}

We start with the case $\nu = 0.4$. Figure~\ref{fig:errors} shows, in logarithmic scale, the evolution of the Newton iteration errors $E(m^n)$ and $E(u^n)$ for the three schemes: Newton--SL, Newton--FD, and FD--Newton.
 All the three schemes exhibit comparable convergence behavior, whereas FD--Newton shows a slightly faster decay of the $\ell^\infty$--norm of successive iterates.

In Figure~\ref{fig:quadratic-convergence} we compare the convergence rate of the three
Newton algorithms by plotting $\log(m^{n+1})$ against $\log(m^{n})$. The estimated convergence rate is computed as the slope of the least-squares linear fit of
the data points $(\log(m^{n}),\log(m^{n+1}))$, which provides a numerical
estimate of the exponent $p$ in the relation $m^{n+1} \approx C (m^{n})^{p}$. 
This results in $p=1.408$ for Newton--SL, $p=1.393$ for Newton--FD and $p=1.663$ for FD--Newton.
For a quadratically exact convergent method we expect $p=2$, hence an ideal slope of
$2$ in this log--log plot.

Figure~\ref{fig:dist} displays the space--time evolution of the approximated distributions computed by the three schemes. 
It can be observed that, at the final time, the density concentrates around the points where the terminal cost $G$, shown in Figure~\ref{fig:subr2}, attains its minima. 
Finally, Figure~\ref{fig:vf} presents the corresponding space--time plots of the approximated value functions obtained with the same schemes. 
The solutions appear smooth, as expected for the relatively large diffusion coefficient $\nu = 0.4$. 
Overall, the three methods produce visually consistent results, in good agreement with the reference test reported in~\cite{litest}, confirming the reliability and coherence of the proposed discretizations.
 The near identity of the results for Newton-FD and FD-Newton can be explained by the fact that both strategies employ the same finite difference spatial operators, leading to similar truncation errors and numerical diffusion profiles.
The Newton-SL scheme behaves differently because it is based on a distinct discretization principle derived from the Feynman--Kac representation, which evolves the solution along characteristics. In FD schemes, numerical diffusion is mainly introduced by the upwind stencils used to stabilize the advection term, resulting in grid-based artificial dissipation. In contrast, in SL schemes the dominant source of diffusion arises from the interpolation step at the foot of the characteristics, yielding a different error distribution.
Moreover, Newton-SL is run with a smaller time step, $\Delta t = \mathcal{O}(h^{3/2})$, compared to the FD scheme, which uses $\Delta t = \mathcal{O}(h)$. This higher temporal resolution, combined with the interpolation-based dissipation mechanism, results in a slightly less diffusive approximation in the Newton-SL solution and explains the small variations observed in Figure~\ref{fig:dist}.
\begin{figure}[htb!]
    \centering
    \begin{subfigure}[b]{0.45\textwidth}
        \includegraphics[width=\textwidth]{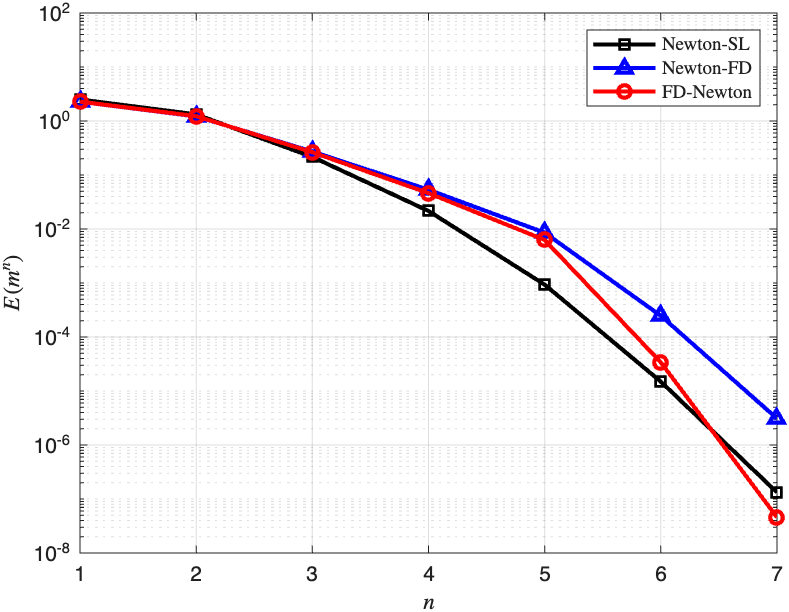}
        \caption{$E(m^n)$}
        \label{fig:sub1}
    \end{subfigure}
    \begin{subfigure}[b]{0.45\textwidth}
        \includegraphics[width=\textwidth]{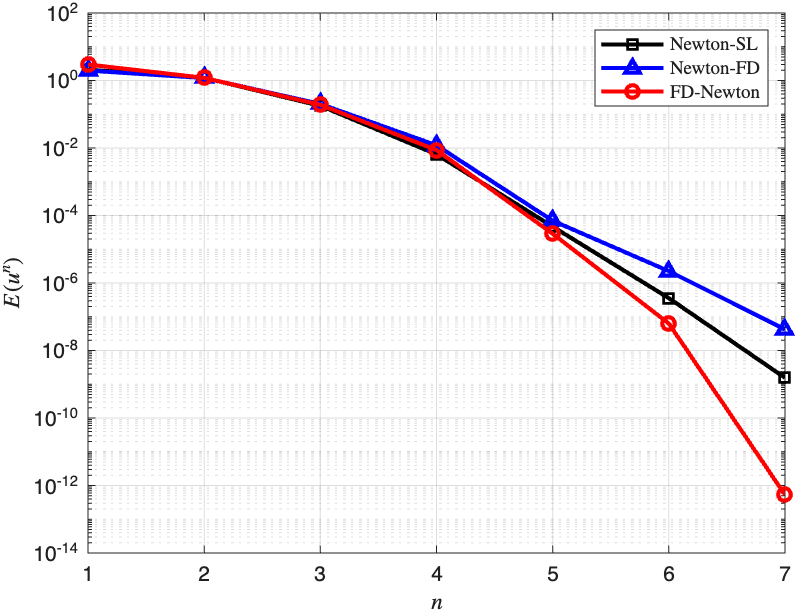}
        \caption{$E(u^n)$}
        \label{fig:sub2}
    \end{subfigure}
    \caption{Test 2.1. Convergence history of the Newton iterations for Newton--SL, Newton--FD, and FD--Newton.}
    \label{fig:errors}
\end{figure}

\begin{figure}[htb!]
    \centering
\includegraphics[width=0.5\linewidth]{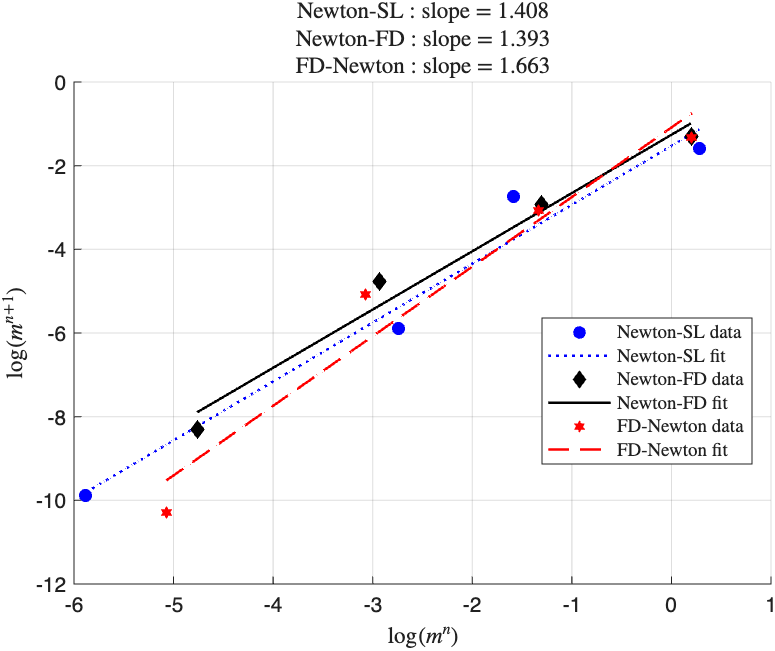}
    \caption{Log–log plot of $\log(m^n)$ versus $\log(m^{n+1})$ for the three Newton-type schemes (Newton–SL, Newton–FD and FD–Newton).}
    \label{fig:quadratic-convergence}
\end{figure}

\begin{figure}[htb!]
    \centering
    \begin{subfigure}[b]{0.32\textwidth}
        \includegraphics[width=\textwidth]{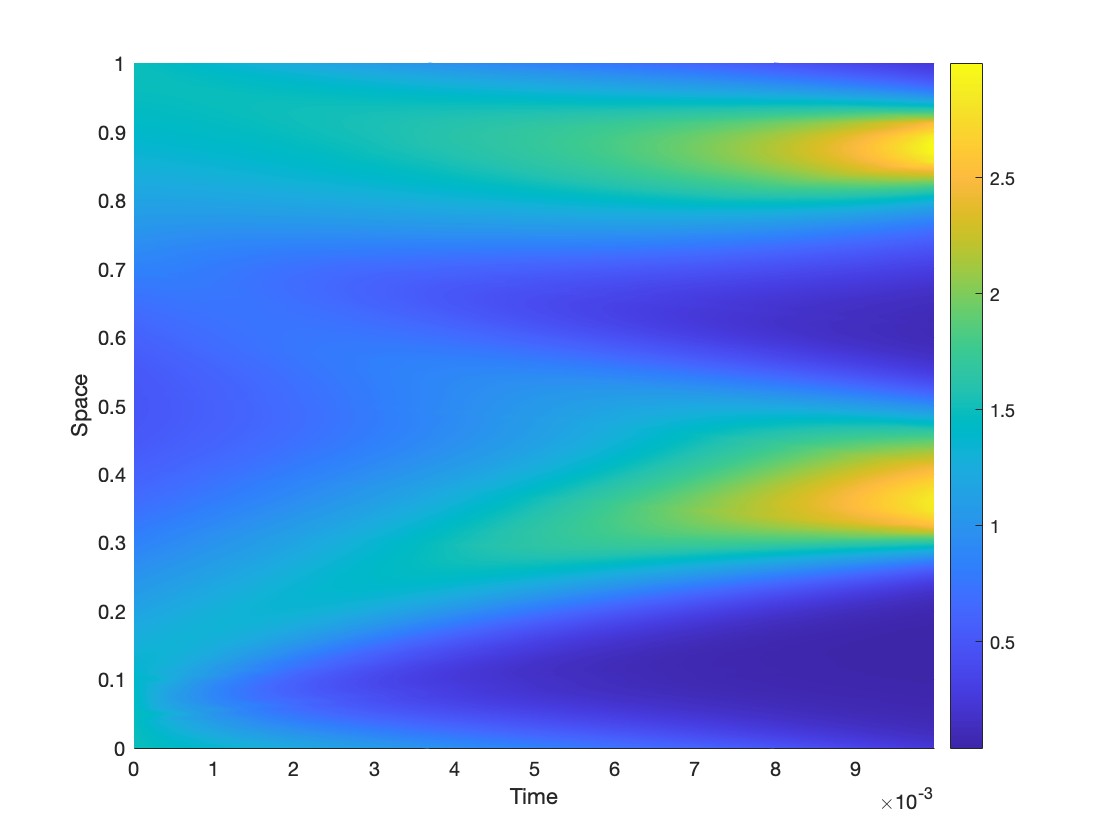}
        \caption{Newton-SL}
        \label{fig:sub1}
    \end{subfigure}
    \begin{subfigure}[b]{0.32\textwidth}
        \includegraphics[width=\textwidth]{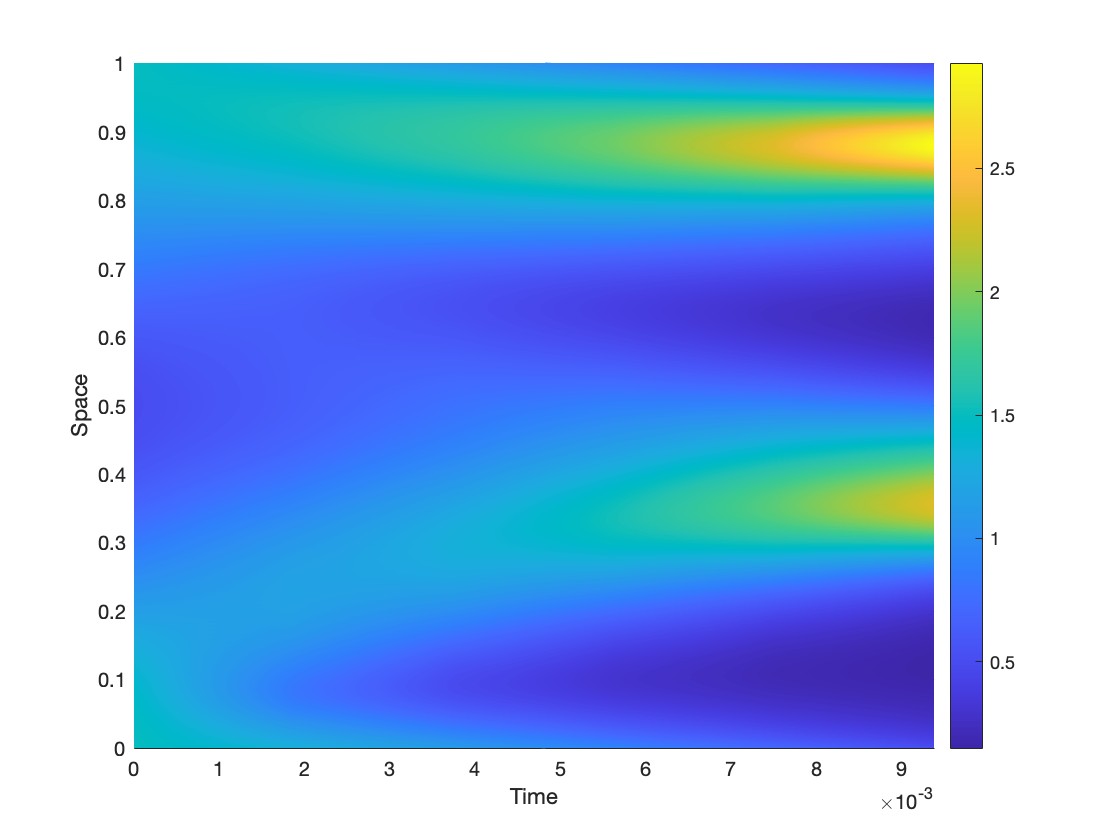}
        \caption{Newton-FD}
        \label{fig:sub2}
    \end{subfigure}
    \begin{subfigure}[b]{0.32\textwidth}
        \includegraphics[width=\textwidth]{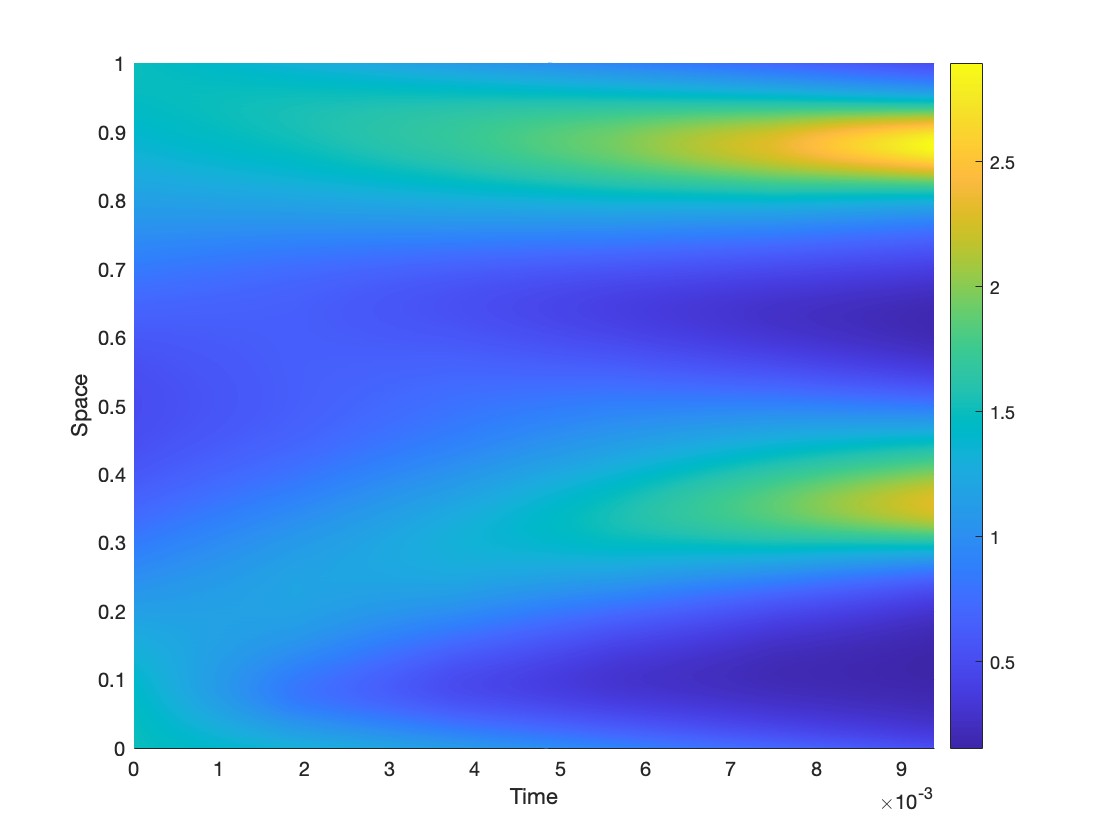}
        \caption{FD-Newton}
        \label{fig:sub3}
    \end{subfigure}
\caption{Test 2.1. Space--time evolution of the approximated distributions $m$ computed by Newton--SL, Newton--FD, and FD--Newton. 
    \label{fig:dist}}
\end{figure}

\begin{figure}[htb!]
    \centering
    \begin{subfigure}[b]{0.32\textwidth}
        \includegraphics[width=\textwidth]{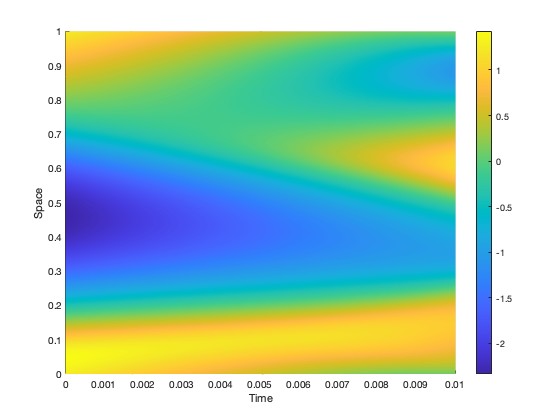}
        \caption{Newton-SL}
        \label{fig:sub1}
    \end{subfigure}
    \begin{subfigure}[b]{0.32\textwidth}
        \includegraphics[width=\textwidth]{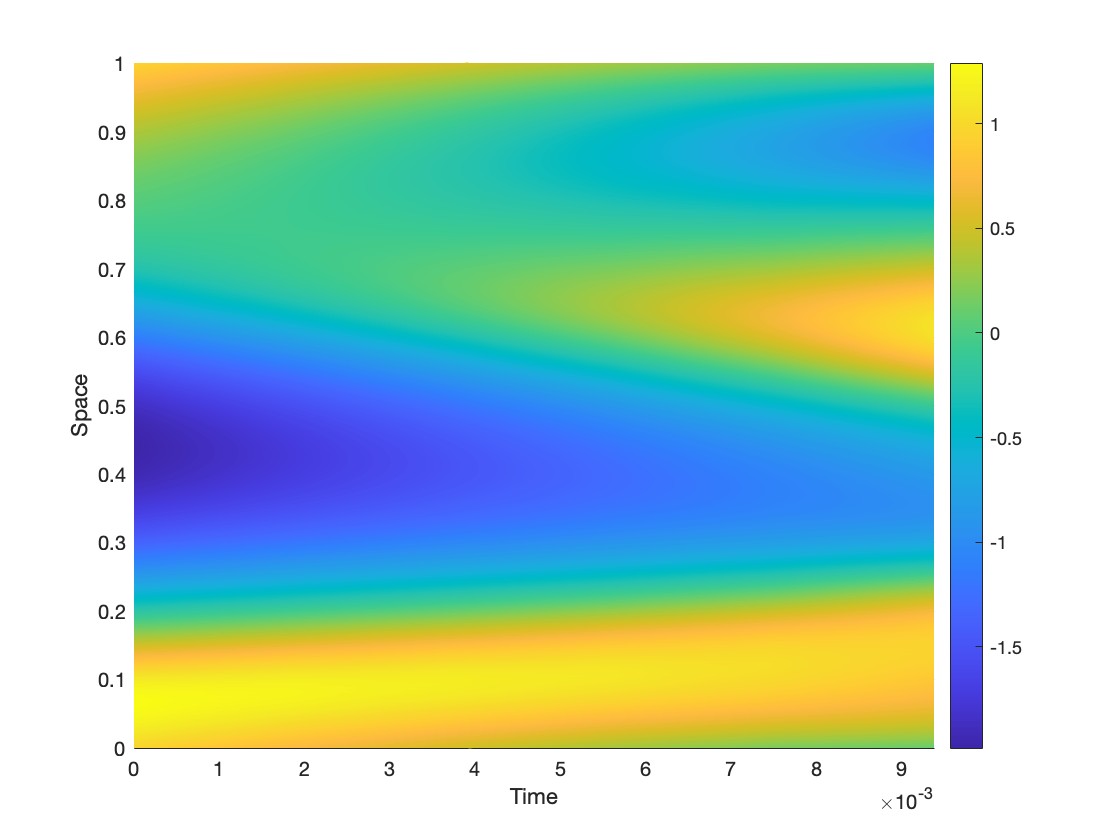}
        \caption{Newton-FD}
        \label{fig:sub2}
    \end{subfigure}
    \begin{subfigure}[b]{0.32\textwidth}
        \includegraphics[width=\textwidth]{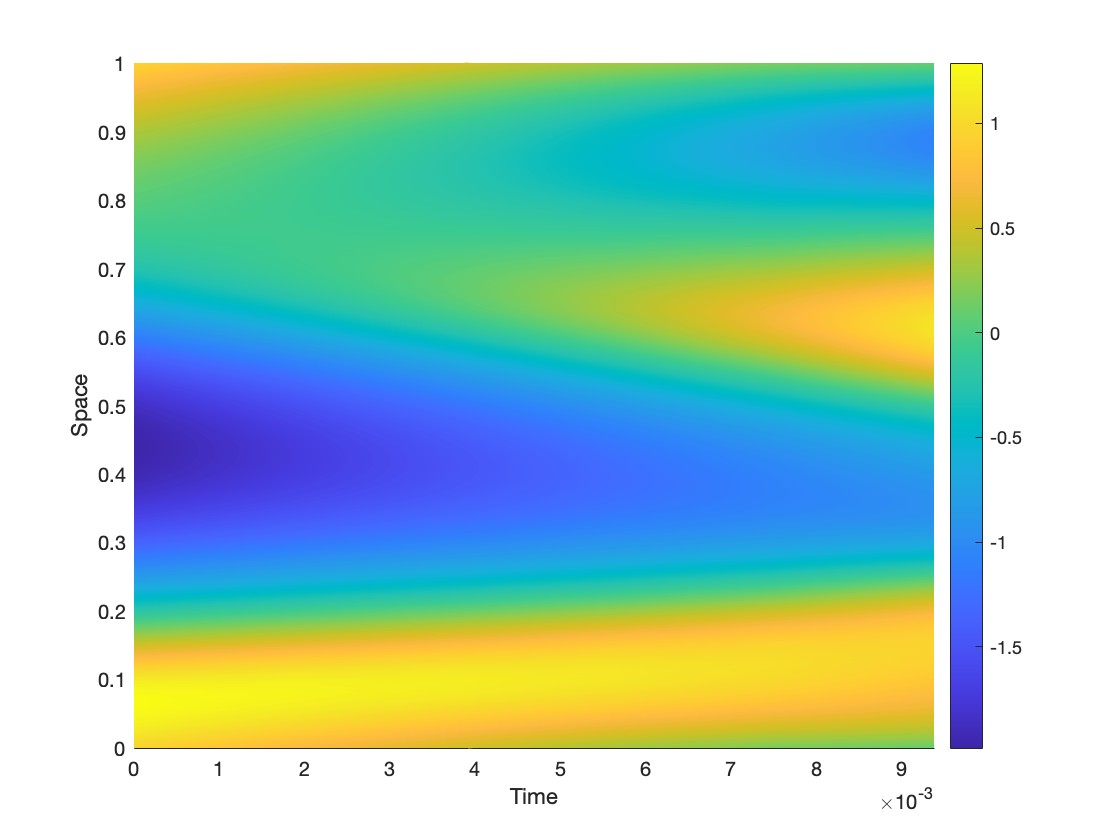}
        \caption{FD-Newton}
        \label{fig:sub3}
    \end{subfigure}
    \caption{Test 2.1. Space--time evolution of  approximated
    value function $u$ computed by Newton-SL, Newton-FD, and FD-Newton methods.}
    \label{fig:vf}
\end{figure}

\subsubsection{Test 2.2}\label{test2}
   We now consider a smaller diffusion coefficient, $\nu = 0.02$.    
As shown in Figure~\ref{fig:s3}, Newton--SL iterations converge after seven steps, reaching the prescribed tolerance. 
In contrast, both Newton--FD and FD--Newton iterations experience breakdown after only a few iterations, as the computed distribution becomes negative. 
This behavior highlights the increased robustness of the Newton--SL scheme in regimes characterized by low diffusion. 
Figures~\ref{fig:s2}--\ref{fig:sb3} display the space--time evolution of the approximated distribution and the corresponding value function obtained with Newton-SL.

Since both FD--Newton and Newton--FD fail when applying local iterations, we employ the global Newton method described in Section~\ref{sec:GN}, 
using the parameters $\beta = 1/2$ and $c = 1/3$ for the line-search procedure. 
These values  ensures sufficient decrease of the merit function and robust globalization of the iterations. 
As expected, both schemes converge under this approach. 
  Figure~\ref{FDglobal3} shows the Newton residual error, the merit function $\Theta$ \eqref{merit_f}, and the Armijo step size $\alpha_n$ along the globalized Newton iterations for the Newton--FD scheme.
Figure~\ref{FDglobal} displays the time evolution of the approximated distribution and value function computed by the global Newton--FD scheme (at convergence).
For brevity, we omit the analogous results for FD--Newton, whose performance is comparable to that of Newton--FD.
These results confirm the effectiveness of the globalized Newton strategy in stabilizing the discretizations and achieving convergence even in challenging small-diffusion MFG regimes.

\begin{remark}
In \cite{Achlau}, the authors solve a finite difference discretization of the MFG system by employing Newton's method combined with a continuation strategy with respect to the diffusion parameter $\nu$ (see also \cite{achdou1,achdou2}). 
This approach is particularly effective for handling problems with small diffusion values. 
Specifically, the problem is first solved for a large value of $\nu$, and the corresponding solution is then used as the initial guess to solve, still by Newton's method, the discrete MFG system with a smaller diffusion coefficient. 
The procedure is repeated until the target (small) viscosity is reached. 
We point out that such a continuation approach has a similar effect to the global Newton iterations, as both techniques aim to improve the robustness of convergence when the local Newton method fails due to poor initialization.
\end{remark}

\begin{figure}[htb!]
\begin{subfigure}[b]{0.4\textwidth}
        \includegraphics[width=\textwidth]{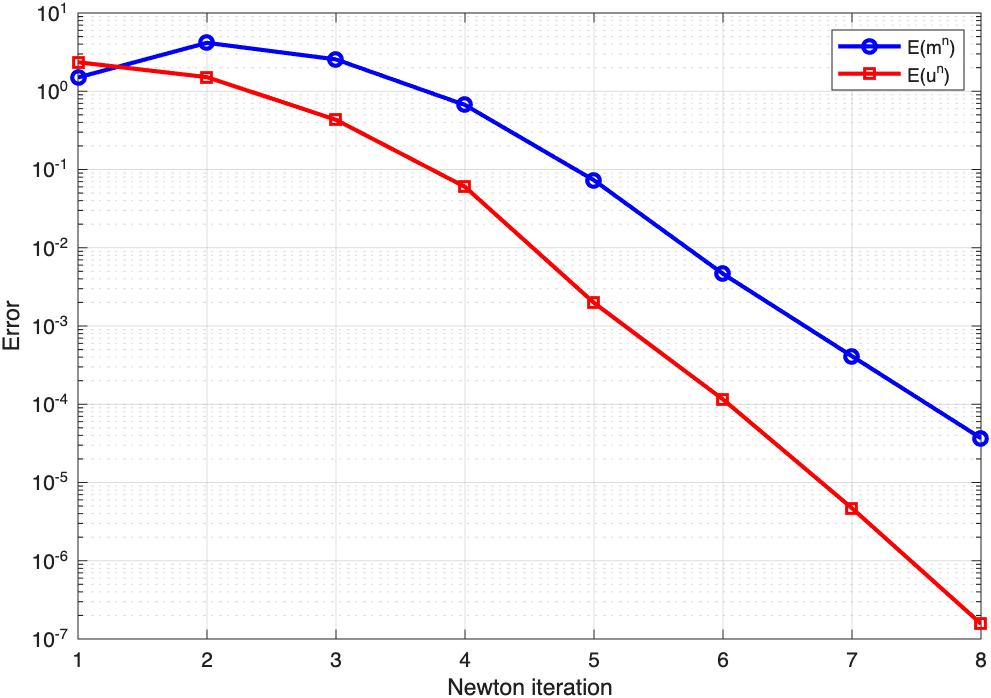}
        \caption{Errors $E(m^n)$ and $E(u^n)$}
        \label{fig:s3}
    \end{subfigure}
    \\
    \begin{subfigure}[b]{0.4\textwidth}
        \includegraphics[width=\textwidth]{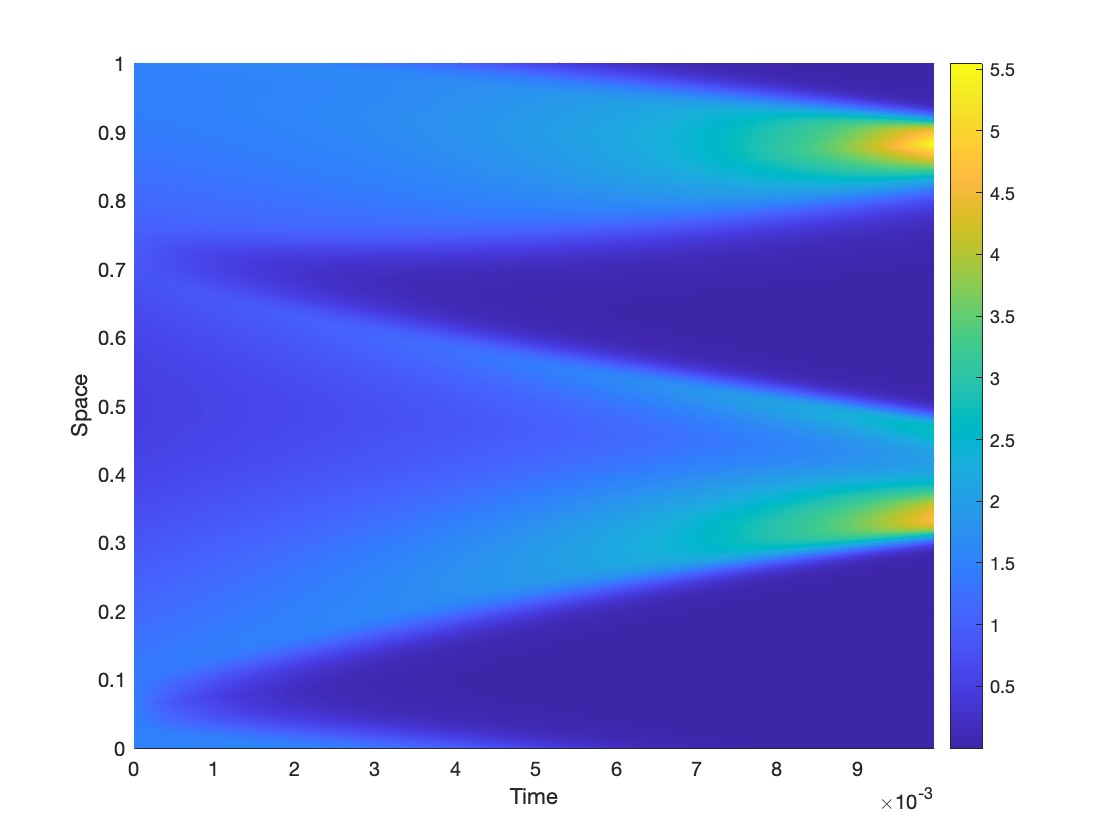}
        \caption{Evolution of $m$}
        \label{fig:s2}
    \end{subfigure}
    \begin{subfigure}[b]{0.4\textwidth}
        \includegraphics[width=\textwidth]{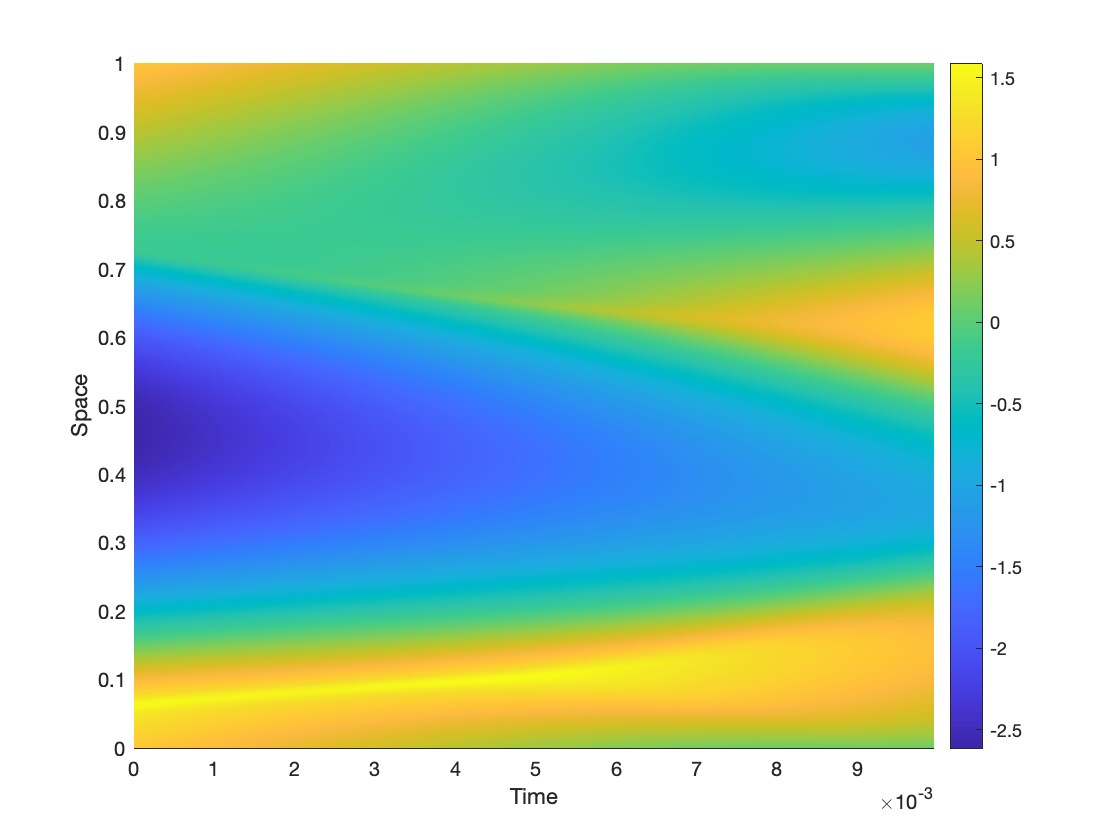}
        \caption{Evolution of $u$}
        \label{fig:sb3}
    \end{subfigure}

    \caption{
    Convergence history of the Newton–SL iterations and space--time evolution of the approximated distribution $m$ and value function $u$ for $\nu = 0.02$.
     }
    \label{sigma_0.02..}
\end{figure}

\begin{figure}[htb!]
    \centering
    \begin{subfigure}[b]{0.32\textwidth}
        \includegraphics[width=\textwidth]{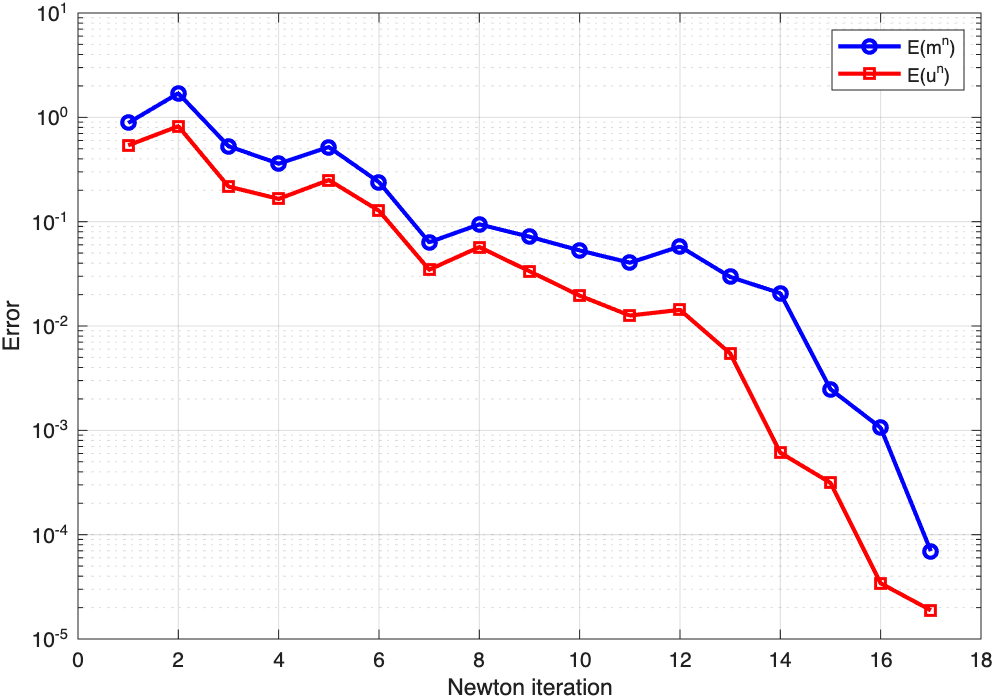}
        \caption{Errors $E(m^n)$ and $E(u^n)$}
    \label{fig:sub2}
    \end{subfigure}
    \begin{subfigure}[b]{0.31\textwidth}
        \includegraphics[width=\textwidth]{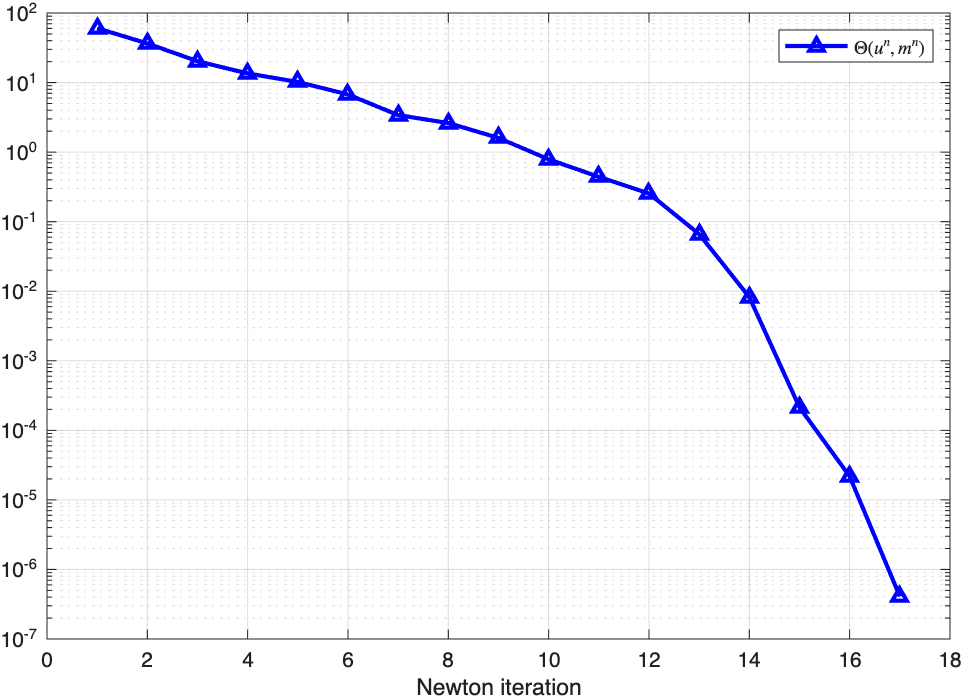}
        \caption{Merit function $\Theta(u^n, m^n)$}
        \label{fig:sub2}
    \end{subfigure}
    \begin{subfigure}[b]{0.31\textwidth}
        \includegraphics[width=\textwidth]{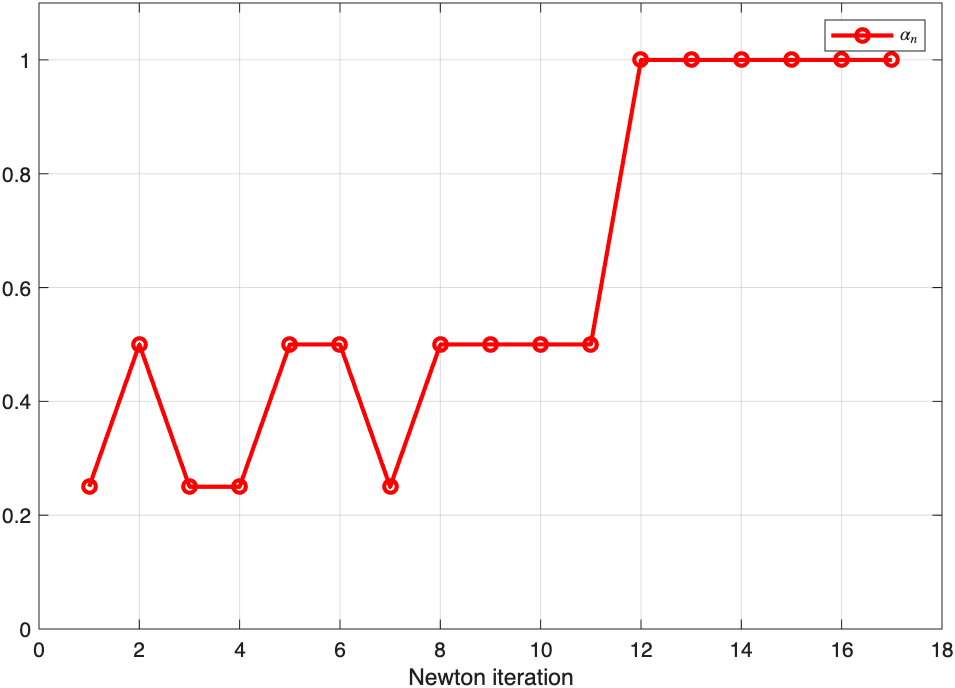}
        \caption{Step size $\alpha^n$}
        \label{fig:sub33}
    \end{subfigure}
    \caption{Test 2.2. 
From left to right: Convergence history for the global Newton iterations for Newton--FD with $\nu=0.02$, the merit function 
$\Theta(u^n, m^n)$, 
and the Armijo step size $\alpha^n$.}
      \label{FDglobal3}
\end{figure}

\begin{figure}[htb!]
    \begin{subfigure}[b]{0.4\textwidth}
        \includegraphics[width=\textwidth]{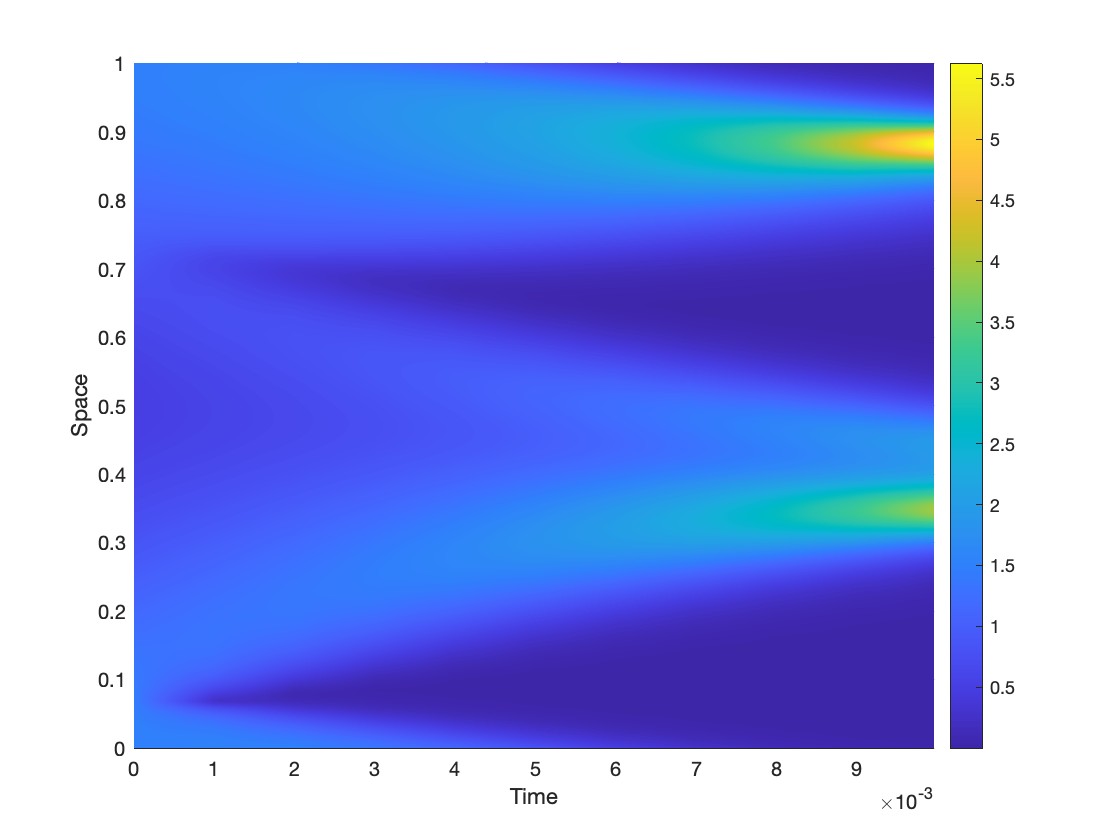}
        \caption{Evaluation of $m$}
        \label{fig:sub2}
    \end{subfigure}
    \begin{subfigure}[b]{0.4\textwidth}
        \includegraphics[width=\textwidth]{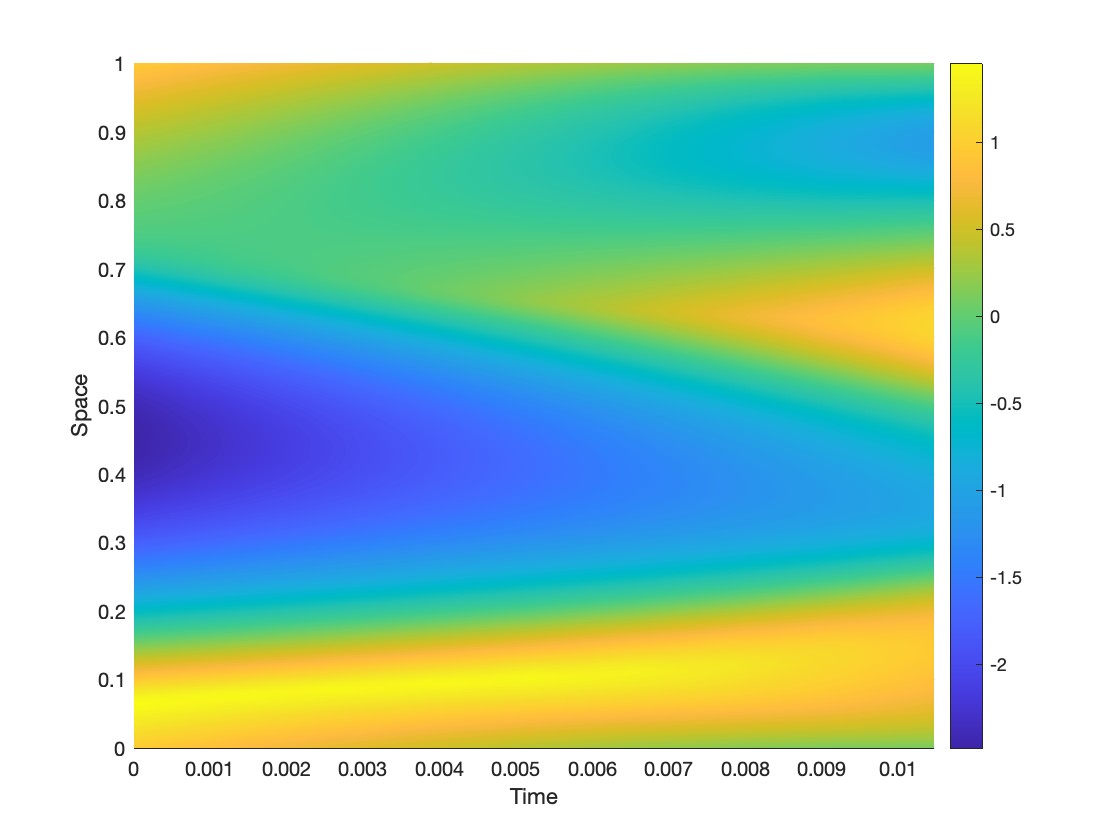}
        \caption{Evaluation of $u$}
        \label{fig:sub3}
    \end{subfigure}
    \caption{Test 2.2.
    Space--time evolution of the approximated distribution $m$ and value function $u$ by global Newton-FD for $\nu = 0.02$.}
      \label{FDglobal}
\end{figure}

\begin{remark}
    A simple alternative to the globalized Newton method is the relaxed Newton method,
    which uses a fixed damping parameter $\alpha \in (0,1)$ at every iteration.
    The globalized Newton method with Armijo line search can be viewed as an adaptive
    generalization: it automatically selects the damping parameter $\alpha_n$ at each
    iteration. As shown in Figure~\ref{FDglobal3}, the step size is small in the early
    iterations where the initial guess is far from the solution, and increases to
    $\alpha_n = 1$ as the method approaches convergence. This shows that damping is
    necessary in the early phase, but becomes unnecessary near the solution, a behavior
    that a fixed $\alpha < 1$ cannot capture.
\end{remark}

\subsection{Test 3: a MFG problem with non separable Hamiltonian}

We consider a one-dimensional MFG system featuring a non-separable Hamiltonian,
\begin{equation*}
\begin{cases}
    & -\partial_t u - 0.05\Delta {u} + \frac{1}{2(1+4m)^\gamma}\lvert Du\rvert^2 = \zeta m \quad \text{in } [0,1]\times[0,1], \\
    & \partial_t m - 0.05\Delta m - \text{div}\left(\frac{mDu}{(1+4m)^\gamma}\right) = 0 \quad \text{in }  [0,1]\times[0,1],\\
    &G(x)=10\min \{(x-0.3)^2,\,(x-0.7)^2\}\quad\text{in }  [0,1],\\
    &m_0(x)=4\;\text{for }x\in[0.375,0.625],\;m_0(x)=0 \; \text{otherwise}
    \end{cases}
\end{equation*}
In this test, we compute the solution of the system using the Newton--SL scheme, 
with parameters $\gamma = 1.5$, $\zeta = 1$, spatial step $h = 5 \cdot 10^{-3}$, 
and time step $\Delta t = h^{3/2}/2$.

In Figure~\ref{non-sep-1d}, we display the time evolution of the approximated density and value function at the time instants $t_k \in \{0, 0.2, 0.4, 0.6, 0.8, 1\}$. 
We observe that the population distribution splits into two groups: one moving toward the left target and the other toward the right target. 
However, due to the presence of congestion and crowd-aversion effects, neither group fully concentrates on its respective target at the final time. For this non-separable Hamiltonian test, the simulation obtained with the Newton--SL scheme 
is stable and accurate, and remains fully consistent with the reference outcomes reported in~\cite{litest}.

We then vary the value of the diffusion coefficient $\nu$ to investigate its influence on the solution.
Figure~\ref{diffrent-diff} reports the terminal density $m(T,x)$ and the initial value function $u(0,x)$
for $\nu \in \{0.2,\,0.05,\,0.005\}$.
In particular, we can see that decreasing $\nu$ reduces diffusive smoothing
and yields a more pronounced separation/concentration of the distribution at final time. Finally, Table~\ref{tab:test3} reports the corresponding iteration counts and CPU times for each value of $\nu$.

\begin{figure}[htb!]
  \centering
\includegraphics[width=7.5cm]{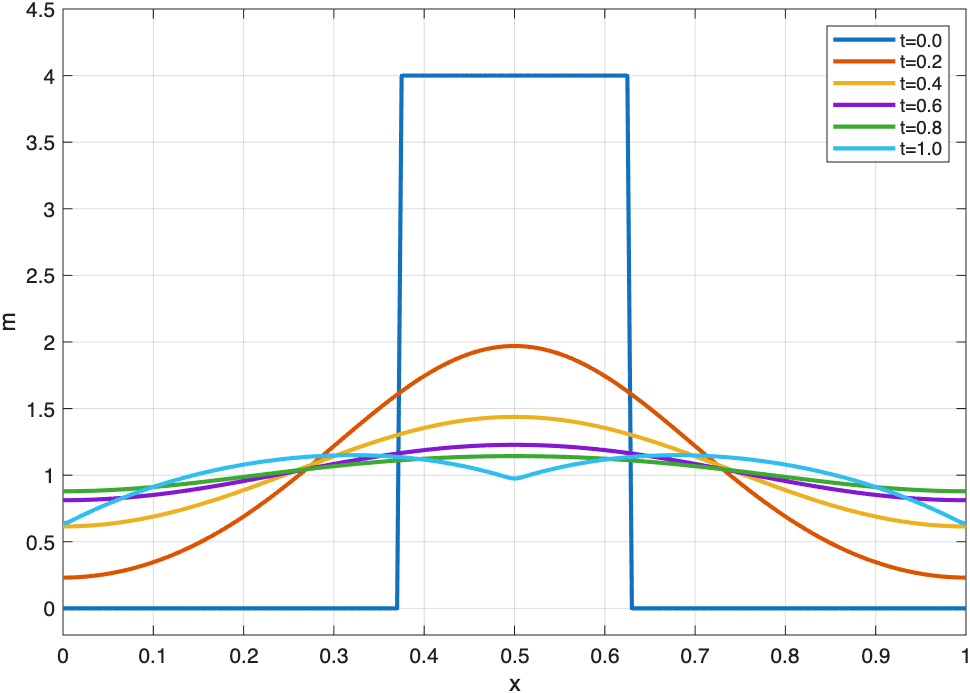}
    \includegraphics[width=7.5cm]{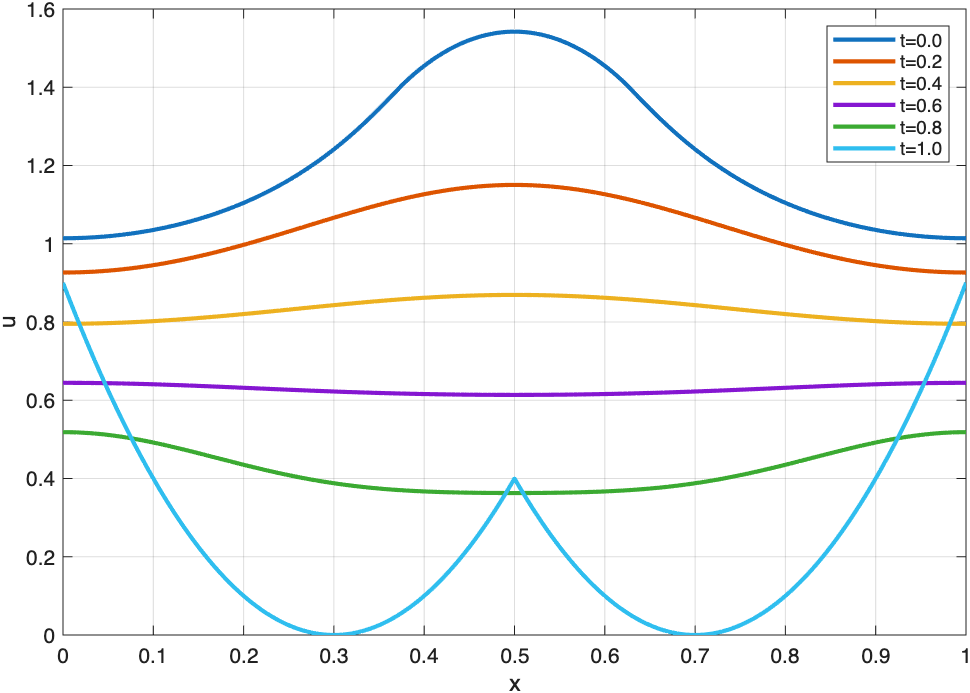}
   \caption{Test~3. Time evolution of the approximated density (left) and approximated value function (right) }
 \label{non-sep-1d}
\end{figure}

\begin{figure}[htb!]
  \centering
\includegraphics[width=7.5cm]{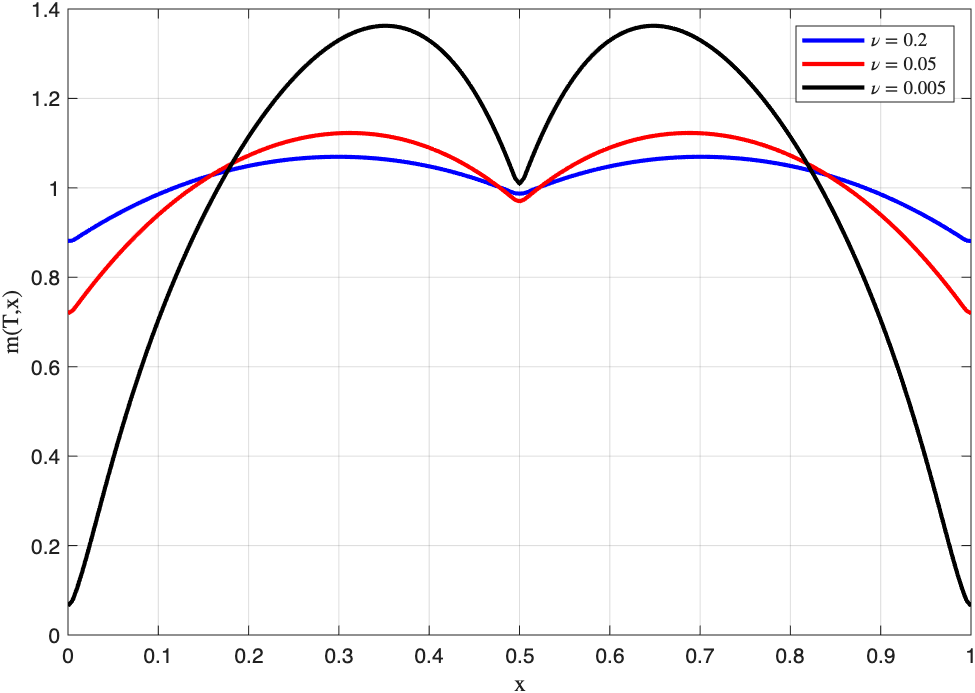}
    \includegraphics[width=7.5cm]{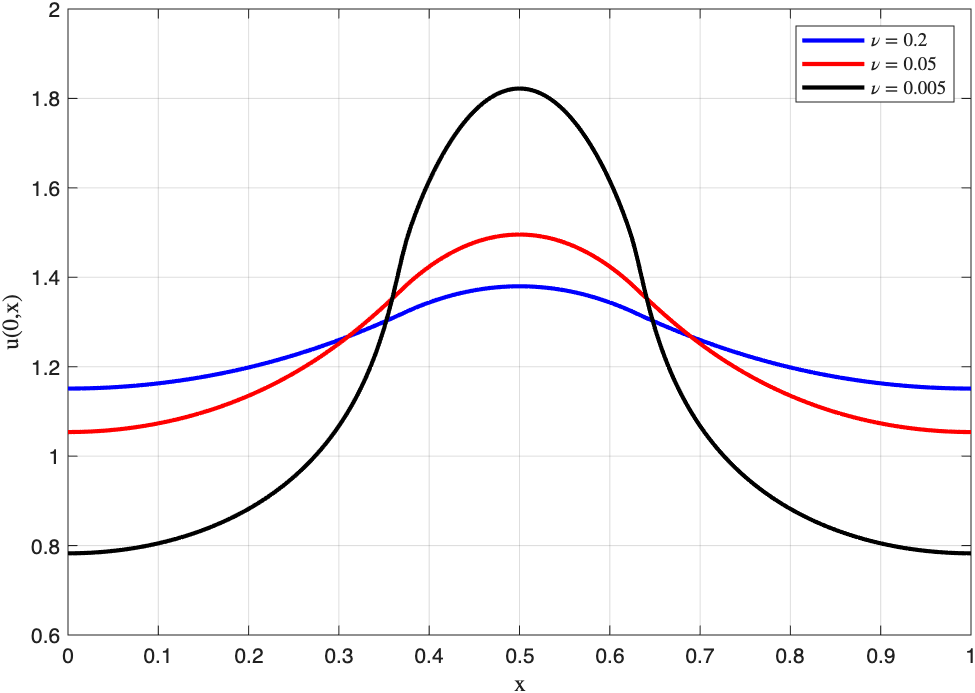}
   \caption{Test~3. The approximated density at final time (left) and the approximated value function at initial time (right) for different values of $\nu$ }
 \label{diffrent-diff}
\end{figure}

\begin{table}[htb!]
\centering
\begin{tabular}{lcc}
\hline
$\nu$ & Number of iterations & CPU (s) \\
\hline
0.2      & 5 & 2.631 \\
0.05   & 6 & 4.064 \\
0.005& 14 & 12.83 \\
\hline
\end{tabular}
\caption{Iteration counts and CPU time for different diffusion coefficients $\nu$.}
\label{tab:test3}
\end{table}

\subsection{Test 4: a MFG with separable Hamiltonian in dimension two}
We now consider a two-dimensional MFG system, previously investigated numerically in~\cite{litest}. 
The computational domain is $[0,1] \times (0,1)^2$, with diffusion coefficient $\nu = 0.4$, 
and the following data:
\begin{align*}
        m_0(x_1,x_2)&=1+12\cos(2\pi x_1)+12\cos(2\pi x_2),\\
          G(x_1,x_2)&=\cos(2\pi x_1)+\cos(2\pi x_2),\\
          H(x_1,x_2,p)&=|p|^2+\sin(2\pi x_1)+\sin(2\pi x_2)+\cos(4\pi x_1),\\
        F(m)&=m^2.
\end{align*}
We set $h = 10^{-2}$ and $\Delta t = h^{3/2}/2$. 
The evolution of the approximated distribution $m$, computed with the Newton--SL scheme, 
is displayed in Figure~\ref{dist_m} at times $t_k \in \{0, 0.5, 0.75, 1\}$. 
Figures~\ref{fig:sub2}--\ref{fig:sub3} illustrate the stationary profile attained by the density at intermediate times, 
which can be interpreted as a manifestation of the turnpike effect.
Turnpike phenomena for mean field games with local couplings have been discussed in~\cite{turnpike}.
The results are consistent with those reported in~\cite{litest}, confirming the accuracy of the proposed scheme 
in reproducing two-dimensional MFG dynamics.

\begin{figure}[htb!] 
    \centering
    \begin{subfigure}{0.45\textwidth}
        \includegraphics[width=\linewidth]{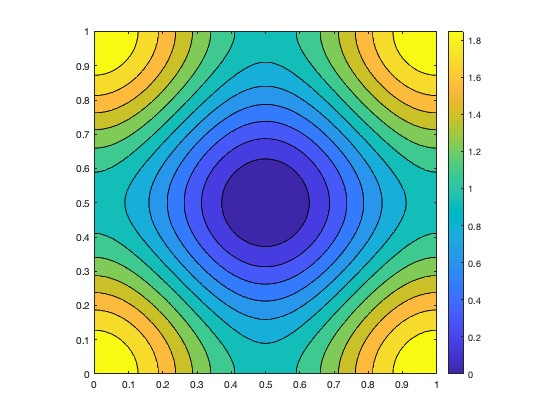}
        \caption{$t_k=0$}
        \label{fig:sub1}
    \end{subfigure}
    \begin{subfigure}{0.45\textwidth}
        \includegraphics[width=\linewidth]{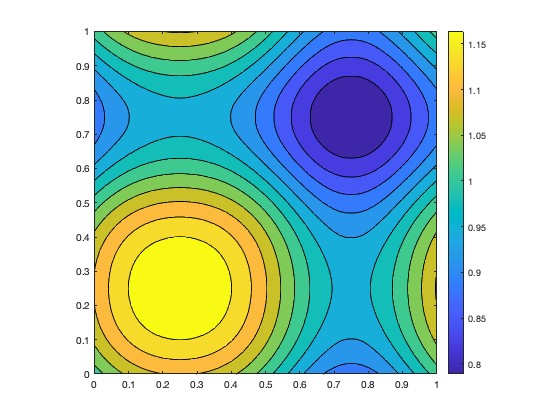}
        \caption{$t_k=N_t/2$}
        \label{fig:sub2}
    \end{subfigure}
    \begin{subfigure}{0.45\textwidth}
        \includegraphics[width=\linewidth]{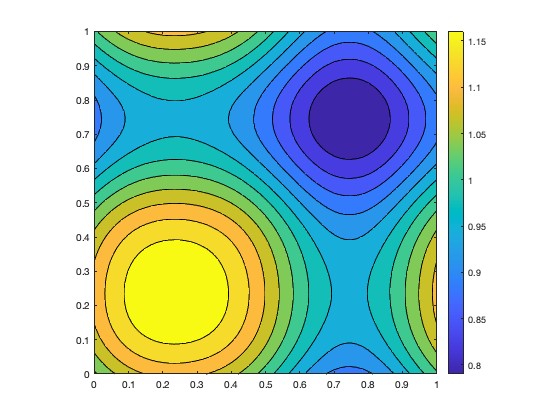}
        \caption{$t_k=3N_t/4$}
        \label{fig:sub3}
    \end{subfigure}
    \begin{subfigure}{0.45\textwidth}
        \includegraphics[width=\linewidth]{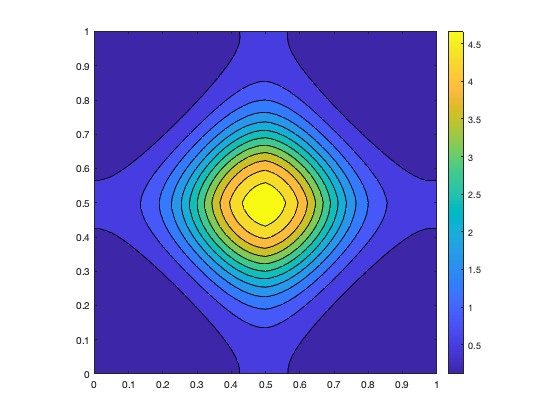}
        \caption{$t_k=N_t$}
        \label{fig:sub33}
    \end{subfigure}
    \caption{
    Test~4. Time evolution of the approximated distribution $m$ at with Newton-SL at different times.}
    \label{dist_m}
\end{figure}

\section{Conclusion}
In this work, we have proposed a new technique for numerically solving a second-order mean field games system with local coupling by applying Newton iterations in infinite dimensions. Our study is based on a series of numerical experiments comparing two possible strategies: performing Newton iterations at the continuous level and then discretizing, or discretizing first and applying Newton iterations to the resulting finite-dimensional non linear system.
A first remark is that the former strategy leads to a relatively simple scheme, since each Newton step reduces to approximate a system of two linear parabolic PDEs.
Both Newton--SL and Newton--FD follow a “iteration then discretization’’ strategy: the Newton direction is computed after discretization, and therefore the iterations do not follow the exact gradient of the continuous problem. As a consequence, the ideal quadratic convergence predicted at the continuous level is not fully observed in practice. In contrast, a “discretization then iteration” strategy, such as FD--Newton, builds the Newton iterations directly from the discrete problem and, in our tests, tends to follow more closely the true discrete gradient, sometimes displaying a behaviour closer to quadratic convergence.
Nevertheless, one should not expect the “discretization then iteration’’ strategy to systematically yield a globally smaller numerical error. Discretizing before or after the Newton step introduces different truncation errors, which may counterbalance any local improvement in the convergence rate.
To improve the performance of Newton-based schemes for MFG systems,
a promising direction is to employ higher-order discretization schemes within an iterate-then-discretize strategy
(e.g., high-order semi-Lagrangian or high-order finite difference methods).
This allows the numerical solution to remain closer to the continuous formulation,
thereby significantly enhancing both the convergence speed and the overall accuracy of the solver.
Finally, as expected, the Newton--SL scheme exhibits strong robustness in advection-dominated regimes, whereas both Newton--FD and FD--Newton require a stabilization strategy to prevent breakdown. In this regard, the use of a globalized Newton method proved particularly effective, especially for problems with small viscosity coefficients.

\section*{Acknowledgments}
The first author were partially supported by Italian Ministry of Instruction, University and Research (MIUR) (PRIN Project2022238YY5, ``Optimal control problems: analysis, approximation'') and by INdAM–GNCS Project, codice CUP$\_$E53C24001950001. 

\bibliographystyle{plain}
\bibliography{references}
\end{document}